\theoremstyle{plain}
\newtheorem{theorem}{{Theorem}}[section]
\newtheorem{lemma}[theorem]{{Lemma}}
\newtheorem{proposition}[theorem]{{Proposition}}
\newcommand{\R}{\mathbb{R}}
\newcommand{\Z}{\mathbb{Z}}
\newcommand{\BF}{\mathbb{F}} 
\newcommand{\CA}{\mathcal{A}} 
\newcommand{\essl}{\mathfrak{sl}}
\newcommand{\map}[1]{\xrightarrow{#1}}
\newcommand{\into}{\hookrightarrow}
\newcommand{\wo}{\backslash}
\newcommand{\resp}{resp.\ }
\newcommand{\doublebar}[1]{\overline{\overline{#1}}}
\newcommand{\abuts}{\rightrightarrows} 
\newcommand{\ovl}{\overline}
\newcommand{\Id}{\mathbb{I}} 
\newcommand{\bdy}{\partial}
\newcommand{\cone}{\mathrm{Cone}}
\newcommand{\Kh}{\mathit{Kh}}
\newcommand{\CKh}{\mathit{CKh}} 
\newcommand{\Khred}{\widetilde{\Kh}}
\newcommand{\CKhred}{\widetilde{\CKh}} 
\newcommand{\BN}{{\mathit{BN}^2}} 
\newcommand{\CBN}{{\mathit{CBN}^2}} 
\newcommand{\BNred}{\widetilde{\mathit{BN}}^2} 
\newcommand{\CBNred}{\widetilde{\mathit{CBN}}^2} 
\newcommand{\HFhat}{\widehat{\mathit{HF}}}
\newcommand{\HFplus}{\mathit{HF}^+}
\newcommand{\CFI}{\mathit{CFI}}
\newcommand{\CFhat}{\widehat{\mathit{CF}}}
\newcommand{\CFIhat}{\widehat{\mathit{CFI}}}
\newcommand{\HFIhat}{\widehat{\mathit{HFI}}}
\newcommand{\HFI}{\mathit{HFI}}
\newcommand{\HD}{\mathcal{H}}
\newcommand{\fraks}{\mathfrak{s}} 
\newcommand{\alphas}{\boldsymbol{\alpha}}
\newcommand{\betas}{\boldsymbol{\beta}}
\newcommand{\Hext}{\mathcal{H}} 
\newcommand{\vv}{\mathbf{v}}
\newcommand{\CFAhat}{\widehat{\mathit{CFA}}}
\newcommand{\CFDhat}{\widehat{\mathit{CFD}}}
\newcommand{\CFDAhat}{\widehat{\mathit{CFDA}}}
\newcommand{\CFDDhat}{\widehat{\mathit{CFDD}}}
\newcommand{\CFAAhat}{\widehat{\mathit{CFAA}}}
\newcommand{\CFDAAhat}{\widehat{\mathit{CFDAA}}}
\newcommand{\DT}{\boxtimes} 
\newcommand{\bt}{\mid} 
\newcommand{\idmap}{\mathrm{Id}} 
\newcommand{\AZ}{\mathsf{AZ}}
\newcommand{\AZbar}{\overline{\AZ}}
\newcommand{\ZZ}{\mathcal{Z}}
\newcommand{\h}{{\Psi}} 
\newcommand{\hcomp}{{\psi}} 
\newcommand{\ro}[1]{\rho_{#1}} 
\newcommand{\io}[1]{\iota_{#1}} 
\newcommand{\rod}[1]{\rho^*_{#1}} 
\newcommand{\iod}[1]{\iota^*_{#1}} 
\newcommand{\flip}[1]{
    \reflectbox{\rotatebox[origin=c]{180}{#1}}
    }
\newcommand{\deltaAZAZ}{\delta^{\scaleto{\AZbar\cup\AZ}{5pt}}}
\newcommand{\F}{\mathcal{F}} 
\title[Khovanov homology and Involutive Heegaard Floer homology]{Khovanov homology and the Involutive Heegaard Floer homology of branched double covers}
\author[A. Alishahi]{Akram Alishahi}
\address{Department of Mathematics, University of Georgia, Athens, GA 30602, U.S.A.}
\email{akram.alishahi@uga.edu}
\author[L. Truong]{Linh Truong}
\address{Department of Mathematics, University of Michigan, Ann Arbor, MI 48109, U.S.A.}
\email{tlinh@umich.edu}
\author[M. Zhang]{Melissa Zhang}
\address{Department of Mathematics, UC Davis, One Shields Ave., Davis, CA 95616-8633, U.S.A.}
\email{\href{mailto:mlzhang@ucdavis.edu}{mlzhang@ucdavis.edu}}
\date{\today}
\begin{document}

\maketitle

\begin{abstract}
    We use involutive Heegaard Floer homology to extend the Ozsv\'ath-Szab\'o branched double cover spectral sequence relating a version of Khovanov homology and the Heegaard Floer homology of branched double covers. Our main tools are Lipshitz, Ozsv\'ath, and Thurston's reconstruction of the Ozsv\'ath-Szab\'o spectral sequence using bordered Floer homology and Hendricks and Lipshitz's surgery exact triangle in involutive bordered Floer homology. 
\end{abstract}

\tableofcontents

\section{Introduction}

The interactions between the link invariants from Khovanov homology and Heegaard Floer homology have been studied extensively over the past two decades. The first relation is a spectral sequence from the reduced Khovanov homology to the Heegaard Floer homology of the branched double cover of the link constructed by Ozsv\'{a}th and Szab\'{o} \cite{OzsvathSzaboss} in 2005. Gauge theoretic versions of this spectral sequence have been constructed for monopole Floer homology by Bloom in \cite{Bloom} and for instanton Floer homology by Kronheimer and Mrowka in \cite{KM:unknot}.  In fact, Kronheimer and Mrowka use their spectral sequence to prove that Khovanov homology detects the unknot. 

In \cite{Manolescu-triangulation}, Manolescu used the $\mathrm{Pin}(2)$-symmetry on Seiberg-Witten Floer homology to define a $\mathrm{Pin}(2)$-equivariant theory, and used this to disprove the Triangulation Conjecture.  En route to constructing a Heegaard Floer analogue of $\mathrm{Pin}(2)$-equivariant Seiberg-Witten Floer homology, Hendricks and Manolescu defined \emph{involutive Heegaard Floer homology}. This theory found topological applications in many areas, including the study of the homology cobordism group \cite{DHST, HHSZ}, and the answer to Kawauchi's forty-year-old question about the smooth sliceness of the $(2,1)$-cable of the figure-eight knot \cite{DKMPS}.

In 2016, Lin introduced involutive monopole Floer homology \cite{Lin}, as an analogue of involutive Heegaard Floer homology of Hendricks and Manolescu \cite{HendricksManolescu}.
Moreover, he constructed a spectral sequence converging to involutive monopole Floer homology of the branched double cover of a link.  The $E^2$ page of this spectral sequence is a  Bar-Natan homology \cite{BarNatan}, a variant of Khovanov homology similarly defined and combinatorially (i.e.\ algorithmically) constructed. 

Our main theorem is the analogue of Lin's result in the setting of involutive Heegaard Floer homology. For a link $L \subset S^3$, we let $\overline{L}$ denote the mirror of the link $L$, and let $\Sigma(L)$ denote the branched double cover of $S^3$ with branch locus $L$. 
We construct a spectral sequence relating a version of reduced Bar-Natan homology $\BNred(\overline{L})$ of $\overline{L}$ with the involutive Heegaard Floer homology of the branched double cover $\Sigma(L)$. 

\begin{theorem}
For any link $L \subset S^3$, there is a spectral sequence of $\BF[Q]/(Q^2)$-modules whose $E^2$ page is isomorphic to $\BNred(\overline{L})$ and which converges to involutive Heegaard Floer homology $\HFIhat(\Sigma(L))$. 
\end{theorem}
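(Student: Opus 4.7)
The plan is to mimic the Lipshitz--Ozsv\'ath--Thurston reconstruction of the Ozsv\'ath--Szab\'o spectral sequence, but with bordered Floer homology replaced by Hendricks--Lipshitz's involutive bordered Floer homology. Fix a diagram $D$ of $L$ with $n$ crossings, and consider the usual cube of resolutions $\{0,1\}^n$. Each complete resolution $v \in \{0,1\}^n$ yields an unlink whose branched double cover $\Sigma_v$ is a connected sum of copies of $S^1 \times S^2$ (with $S^3$ when the unlink has one component), and every edge of the cube corresponds to a crossing change which, on the level of branched double covers, becomes a triple of manifolds in a surgery exact triangle. I will work with $\overline{L}$ so that the conventions of the Ozsv\'ath--Szab\'o branched double cover spectral sequence apply, and decorate the resolutions so that Heegaard Floer computations can be assembled via bordered Floer pairing.

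The central input is Hendricks--Lipshitz's involutive surgery exact triangle: for each edge of the cube, I replace the short exact triangle connecting the three resolutions at a single crossing with its involutive analogue, obtaining an iterated mapping cone of $\CFIhat$-type complexes. Iterating the mapping cone construction crossing by crossing (as in Lipshitz--Ozsv\'ath--Thurston) produces a single complex quasi-isomorphic to $\CFIhat(\Sigma(L))$. Filtering this total complex by the cube coordinate yields the desired spectral sequence of $\BF[Q]/(Q^2)$-modules converging to $\HFIhat(\Sigma(L))$; the $\BF[Q]/(Q^2)$-module structure is inherited from the action of $Q$ on $\CFIhat$ encoded in the involutive bordered package.

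To identify the $E^2$ page I will compute, for each vertex $v$, the involutive Floer homology of the corresponding connected sum $\#^k (S^1\times S^2)$, and for each edge the induced connecting map in the involutive surgery triangle, in each case via the bordered description. The key structural feature is that the involutive triangle contributes, beyond the ordinary Ozsv\'ath--Szab\'o edge map, an additional $Q$-weighted correction term coming from the homotopy between $\iota$ and the composition of the two natural cobordism maps. These extra terms precisely realize the $H$-deformation of the Frobenius algebra underlying Khovanov homology that defines Bar-Natan's theory, so that collapsing the cube via the $E^1$ differential yields the reduced Bar-Natan complex $\CBNred(\overline{L})$; taking $E^2$ gives $\BNred(\overline{L})$. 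The reduction (rather than unreduced Bar-Natan) appears because, as in the Lipshitz--Ozsv\'ath--Thurston setup, one basepoint circle in the diagram is remembered to pin down a $\spinc$ structure on $\Sigma(L)$.

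The main obstacle I expect is the $E^2$-page identification, specifically the explicit bordered Floer computation of the involutive connecting maps at each edge of the cube and verification that they match the Bar-Natan edge maps on the nose (up to homotopy compatible with the filtration). This requires a careful analysis of Hendricks--Lipshitz's involutive pairing theorem, together with a model computation of $\iota$ on the relevant type $D$ and type $A$ modules for the $0$- and $1$-resolutions of a single crossing; the rest of the argument is a formal diagonalization of the resulting hypercube, parallel to the non-involutive case.
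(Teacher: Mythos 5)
Your overall strategy—feed the Lipshitz--Ozsv\'ath--Thurston bordered reconstruction of the branched double cover spectral sequence through Hendricks--Lipshitz's involutive bordered technology, filter by the cube of resolutions, and match the $E^1$ differential against the Bar-Natan differential—is exactly what the paper does, and you correctly flag the $E^2$-page identification as the substantive computational burden. However, there are two places where what you sketch would not assemble into a proof without a further idea.

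First, ``iterating the involutive surgery exact triangle crossing by crossing'' is not a formal operation. The Hendricks--Lipshitz triangle identifies $\CFIhat$ of one surgery with a single mapping cone, but as soon as you have two or more crossings the ``$\CFIhat$'' of an intermediate vertex in the cube is a piece of the iterated cone, and you must know that the involution data at different crossings are compatible so that the whole hypercube carries a coherent involution. The paper resolves this by \emph{not} iterating involutive cones at all: it instead shows (Theorem 4.1) that the involution $\iota$ on $\CFhat(\Sigma(L))$, described via $\eta$, the $\AZ$/$\AZbar$ bimodules, and the $\Psi$'s, is chain homotopic to a \emph{filtered} map with respect to the $\{0,1\}^n$-grading already present on the LOT total complex; only then does one take the single cone $\CFIhat = \cone(Q(1+\iota))$ and observe it is filtered. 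That filtration-preservation claim requires the explicit form of $\Omega$ and of the matrix in Equation~\ref{eq:psi-filtration-preserving} (the upper-right entry being zero) and is not supplied by a formal diagonalization.

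Second, the involutive edge maps do not match the Bar-Natan merge and split maps ``on the nose.'' Proposition~\ref{prop:surgerycomp} shows the split-type edge map sends $\xi \mapsto i(\xi)\wedge[K'] + Q\,i(\xi)$, which is not the Bar-Natan $\Delta$ (whose extra term is $Q\cdot \vv_+\otimes\vv_+$). One needs the non-trivial change of basis of Proposition~\ref{prop:changebasis} (Equation~\ref{Eq:chbasis}) before the identification with $\CBNred(\ovl D)$ holds; you gesture at ``up to homotopy compatible with the filtration,'' but the honest fix is a genuine vertexwise isomorphism of $\BF[Q]/(Q^2)$-modules, chosen coherently across the cube so the diagrams commute. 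Finally, a small point: the appearance of the \emph{reduced} Bar-Natan complex is a dimension count ($\HFhat(\#^k(S^1\times S^2))$ has rank $2^k$, matching the reduced chain group at a vertex with $k+1$ circles), not a $\spinc$-structure constraint.
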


Our main tools are Lipshitz, Ozsv\'ath, and Thurston's reconstruction of the Ozsv\'ath-Szab\'o spectral sequence using bordered Floer homology \cite{LOTborderedss, LOTborderedss2}
and Hendricks and Lipshitz's surgery exact triangle in involutive bordered Floer homology \cite{HendricksLipshitz}. 

\subsection{Organization}

In \S 2, we briefly review the relevant versions of Khovanov and Heegaard Floer homology, in order to set the notation going forward.
In \S 3, we compute explicit homotopy representatives of the maps and homotopies that appear when we construct the spectral sequence in \S 4. In \S 5, we prove that, like the Ozsv\'ath-Szab\'o spectral sequence, our involutive link surgeries spectral sequence also collapses immediately on Khovanov-thin knots; we also show that our spectral sequence is nontrivial for the the $(3,7)$-torus knot, whose mirror's branched double cover is $(+1)$-surgery on the left-handed trefoil.

The Python code for all our computer-assisted verifications is available at \cite{math-code}. 

\subsection{Acknowledgements} We thank Kristen Hendricks, Robert Lipshitz, and Liam Watson for helpful conversations. AA was partially supported by NSF grant DMS-2000506. LT was partially supported by NSF grant DMS-2104309.
A portion of this article was completed while AA and MZ were supported by the National Science Foundation under Grant No.~DMS-1928930, while they were in residence at the Simons Laufer Mathematical Sciences Institute (previously known as MSRI) in Berkeley, California, during the Fall 2022 Floer Homotopy Theory semester.

\section{Background}

We assume some familiarity with the hat version of the Heegaard Floer homology  $\HFhat(Y^3)$ of a 3-manifold $Y^3$, as well as ordinary Khovanov homology $\Kh(L)$ of a link $L$ in $S^3$. We work with $\BF = \BF_2$ coefficients exclusively. For more detailed introductions on these constructions, see for instance \cite{OzsvathSzabo-intro} for $\HFhat$, and \cite{BarNatanKh} for $\Kh$.

In this section, we briefly introduce the relevant variations on $\HFhat$ and $\Kh$ for our purposes, as well as some important tools that we will use. This serves partly to set the notation for the remainder of the article.

\subsection{Khovanov and Bar-Natan Homology}

Introduced in \cite{Khovanov}, Khovanov homology assigns to a link $L \subset S^3$ a bigraded $\BF$-vector space 
\[\Kh(L) = \bigoplus_{h,q \in \Z} \Kh^{h,q}(L).\]
Here, the symbol $\Kh^{h,q}(L)$ represents the vector space at homological grading $h$ and quantum grading $q$.

Bar-Natan homology, introduced in \cite{BarNatan}, is a generalization of Khovanov homology 
defined by perturbing the Frobenius algebra. We will focus on a specific version of Bar-Natan homology with coefficients in the ring $\BF[Q]/(Q^2)$ which also appears in \cite{Lin}.

\subsubsection{Cube of resolutions}
Let $L$ be a link in $S^3$, and let $D$ be a diagram for $L$ with $n$ crossings. Fix an ordering on the crossings.

Each crossing in $L$ can be resolved in two ways: 
\includestandalone{images/smoothings}. The binary strings $v \in \{0,1\}^n$ represent vertices of an $n$-dimensional cube, with weight denoted by $|v| = \sum_i v_i$. The natural poset relation in $\{0,1\}^n$ is denoted by $\prec$: $u \prec v$ if and only if $u_i \leq v_i$ for all $i$. 
If for all $j$ in a subset $J \subset \{1,\ldots, n\}$, $u_j = 0$ and $v_j = 1$, and $u_i = v_i$ for all other $i$, then we write $u \prec_k v$, where $k = |J|$.

Let $D_v$ denote the diagram for the smoothing of $D$ where the $i$th crossing is resolved according to the component $v_i$ of $v$. Denote the set of planar circles in $D_v$ by $Z_v$. 

A chain complex $\mathcal{C}$ lies over the cube $\{0,1\}^n$  if, up to an overall  homological grading shift,  the chain group decomposes as $\mathcal{C}_i = \bigoplus_{\alpha \in \{0,1\}^n, |\alpha| = i}\mathcal{C}_{\alpha}$. In this case, we say $\mathcal{C}$ has a grading induced by the cube $\{0,1\}^n$.

\subsubsection{Khovanov chain complex $\CKh(L)$ over $\BF$}

Let $V$ denote the two-dimensional $\BF$-vector space generated by $\{\vv_\pm\}$ in (quantum) degrees $\pm 1$. 
At vertex $v \in \{0,1\}^n$, associate the vector space 
\[
    \CKh(D_v) = V^{\otimes |Z_v|}.
\]
Let $n_+$ (\resp $n_-$) denote the number of positive (\resp negative)  crossings in $D$. 
The Khovanov chain groups (i.e.\ vector spaces over $\BF$) are given by
\[
    \CKh(D) = \bigoplus_{v \in \{0,1\}^n} \CKh(D_v)[-n_-]\{n_+ -2n_-\}.
\]
Here square brackets indicate a shift in homological grading, and curly brackets indicate a shift in the quantum grading: let $\mathcal{C}^{h,q}$ denote the chain group (vector space) at homological grading $h$ and quantum grading $q$; then $\left ( \mathcal{C}[i]\{j\} \right)^{h,q} = \mathcal{C}^{h-i, q-j}$.
To wit, the chain group at homological grading $h$ is
\[
    \CKh^h(D) = \bigoplus_{v \in \{0,1\}^n, \ h = |v|-n_-} \CKh(D_v).
\]
The set of \emph{Khovanov generators} are the pure tensors in $\vv_\pm$ at each vertex. 

The differential splits along the edges of the cube. For $u \prec_1 v$, the differential associated to the edge $D_u \to D_v$ in the cube of resolutions is given by either the \emph{merge map} $m$ or the \emph{split map} $\Delta$, depending on whether $|Z_u| > |Z_v|$ or $|Z_u| < |Z_v|$, respectively. The \emph{active circles} in $Z_u$ and $Z_v$ are those that differ between $Z_u$ and $Z_v$. The others are \emph{passive circles}. The component of the differential $d_{u,v}$ associated to the edge $u \to v$ is given by the following maps, extended by the identity map on the passive circles:

\begin{minipage}{.49\textwidth}
    \begin{align*}
        m : V \otimes V &\to V \\
        \vv_+ \otimes \vv_+ &\mapsto \vv_+ \\
        \vv_+ \otimes \vv_-, \vv_- \otimes \vv_+ &\mapsto \vv_-\\
        \vv_-\otimes\vv_- &\mapsto 0
    \end{align*}
\end{minipage}
\begin{minipage}{.49\textwidth}
    \begin{align*}
        \Delta : V &\to V \otimes V \\
        \vv_+ &\mapsto \vv_+ \otimes \vv_- + \vv_- \otimes \vv_+ \\
        \vv_- &\mapsto \vv_- \otimes \vv_-\\
    \end{align*}
\end{minipage}

The differential on $\CKh(D)$ is defined as 
\[
    d_{\Kh} = \sum_{u \prec_1 v} d_{u,v}.
\]

\subsubsection{Bar-Natan chain complex $\CBN(L)$ over $\BF[Q]/(Q^2)$}
\label{sec:BN-complex-intro}

The Bar-Natan homology
is defined via the same construction as above, but with the following modifications. The quantum degree of $Q$ is $-2$. 

The chain group at vertex $v$ is given by 
\[
    \CBN(D_v) = \CKh(D_v) \otimes_\BF \BF[Q]/(Q^2).
\]
The underlying $\BF$-vector space of $\CBN(D)$ has distinguished generators given by the set of pure tensors in $\vv_{\pm}$ and $\{1,Q\}$. 

The Bar-Natan differential is given by merge and split maps,
    \begin{align*}
        m : V \otimes V \otimes \BF[Q]/(Q^2) &\to V \otimes \BF[Q]/(Q^2) \\
        \vv_+ \otimes \vv_+ &\mapsto \vv_+ \\
        \vv_+ \otimes \vv_-, \vv_- \otimes \vv_+ &\mapsto \vv_-\\
        \vv_- \otimes \vv_- &\mapsto {Q \cdot \vv_- }
    \end{align*}
and 
    \begin{align*}
        \Delta : V \otimes \BF[Q]/(Q^2) &\to V \otimes V \otimes \BF[Q]/(Q^2) \\
        \vv_+ &\mapsto \vv_+ \otimes \vv_- + \vv_- \otimes \vv_+ 
            + {Q \cdot \vv_+ \otimes \vv_+}\\
        \vv_- &\mapsto \vv_- \otimes \vv_- ,
    \end{align*} 
respectively.
As in Khovanov homology, the differential on $\CBN(D)$ is defined as 
\[
    d_{\BN} = \sum_{u \prec_1 v} d^{BN}_{u,v},
\]
where $d^{BN}_{u,v}$ denotes the differential associated to the edge $u\to v$. The homology of this chain complex is denoted $\BN$. 

\subsubsection{Reduced versions}

Let $(L,p)$ be a based link in $S^3$. A generic projection yields a based diagram $(D,p)$ where $p$ is away from the crossings. At any vertex $D_v$ in the cube of resolutions, there is a distinguished circle containing the base point $p$. 
Let $\CKhred(D)$ be the subcomplex of $\CKh(D)$ consisting of elements where the based circle is labeled $\vv_-$. 
Reduced Khovanov homology \cite{Khovanov-reduced} $\Khred(L,p)$ is the homology of the quantum-shifted complex $\CKhred(D,p)\{1\}$. 
Similarly, let $\CBNred(D,p)$ be the subcomplex of $\CBN(D)$ given by $\CKhred(D) \otimes \BF[Q]/(Q^2)$. Reduced Bar-Natan homology $\BNred(L,p)$ is the homology of the quantum-shifted complex $\CBNred(D,p)\{1\}$. 
The homologies $\Khred(L,p)$ and $\BNred(L,p)$ are based link isotopy class invariants and do not depend on the chosen diagram. 
In particular, if $K$ is a knot, then up to isomorphism, $\Khred(K,p)$ and $\BNred(K,p)$ do not depend on the placement of the base point.

\subsubsection{Mirroring and duals}

Following the conventions of \cite{OzsvathSzaboss,Bloom,Lin,LOTborderedss}, we identify the first page $(E_1,d_1)$ of our spectral sequence with a \emph{homologically graded} version of $\BNred$; that is, the components of $d_1$ lie over the opposite cube $(1 \to 0)^n$, where the homological grading is still computed using the same cube grading $|v| = \sum_i v_i$ on the vertices $v$.
The homology of this complex is then identified with $\BNred(\overline{L})$ using Equation \ref{eq:mirror-dual} below. Thus the second page of our spectral sequence abutting to $\HFIhat(\Sigma(L))$ is isomorphic to the $\BNred$ of the \emph{mirror} $\overline{L}$ of the given link $L$.

The Frobenius system $\F_{\BN}$ defining $\BN$ is given by the commutative ring $R = \BF[Q]/(Q^2)$ and Frobenius algebra 
    \[
        A = R[X]/(X^2-QX) = \BF[Q,X]/(Q^2, X^2-QX)
    \]
with multiplication $m$ and comultiplication $\Delta$ defined previously in this section, unit $i:R \into A$ defined by $1 \mapsto 1$, and counit $\epsilon: A \to R$ given by $X \mapsto 1, 1 \mapsto 0$. Here, $1,X$ correspond to $\vv_+, \vv_-$ from earlier in this section, respectively. 

One can check that $\F_{\BN} = (A, i, \epsilon, m, \Delta)$  is self-dual (see \cite{Kh-Frobext,AKhZh-equiv}): there is an $R$-linear isomorphism 
\begin{align*}
    \varphi: A^* &\to A \\
            1^* &\mapsto X \\
            X^* &\mapsto 1
\end{align*}
under which $\F_{\BN}^* = (A^*,\epsilon^*, i^*, \Delta^*, m^*)$ are sent to  $\F_{\BN} = (A,i, \epsilon, m, \Delta)$. 
Therefore 
\begin{equation}
    \label{eq:mirror-dual}
    \CBN(\overline{D}, \F) \cong \CBN(D,\F^*)^* \cong \CBN(D, \F)^*,
\end{equation} 
where the second isomorphism is induced by $\varphi$ (see \cite[Example 3]{Kh-Frobext}); this identification reverses quantum grading.
We can identify each $\CBN(D_v)^*$ with $\CBN(D_v)$, since these are freely generated $R$-modules and $R$ is a principal ideal domain. We also reverse the homological grading under this dualization. After all these identifications, we have that $\CBN(\overline{D}, \F)$ is isomorphic to the homologically graded version of $\CBN$ seen on $(E_1,d_1)$.

Finally, given a base point $p$ on $D$, the corresponding base point $\overline p$ on $\overline D$ yields a subcomplex that is identified with the subcomplex $\CBNred(D,p)$. The homologies $E_2$ and $\BNred(D,p)$ are isomorphic, up to overall grading shifts.

\subsection{Bordered Heegaard Floer homology}
Introduced by Lipshitz, Ozsv\'ath, and Thurston \cite{LOT, LOTbimodules}, bordered Floer homology is an extension of Ozsv\'ath and Szab\'o's Heegaard Floer homology \cite{OS} for 3-manifolds with boundary. 
We first briefly discuss Heegaard Floer homology.

\subsubsection{Heegaard Floer homology}

Let $Y$ be a closed, oriented 3-manifold. The Heegaard Floer homology group $\HFhat(Y)$ of $Y$  is computed from the data of a Heegaard diagram $\HD = (\Sigma, \alphas, \betas, z)$ for $Y$. Here, $\Sigma \subset Y$ is a closed, oriented surface embedded in $Y$ that splits $Y$ into two handlebodies, i.e.\ $Y=H_{\alpha}\cup_{\Sigma}H_{\beta}$. The \emph{$\alpha$-curves} $\alphas$ (\resp \emph{$\beta$-curves} $\betas$) are a collection of mutually disjoint $g$ simple closed curves on $\Sigma$ such that each one of them bounds a disk in the handlebody $H_{\alpha}$ (resp. $H_{\beta}$). Here, $g$ denotes the genus of $\Sigma$. Further, $\Sigma\setminus \alphas$ and $\Sigma\setminus\betas$ is connected. Finally, $z\in\Sigma\setminus (\alphas\cup\betas)$ is a base point. The convention is to draw $\alpha$-curves in red and $\beta$-curves in blue.

The Heegaard Floer chain complex is the Lagrangian intersection Floer homology of the Lagrangian tori
\[
    \mathbb{T}_\alpha = \prod_{i=1}^g \alpha_i 
    \qquad \text{and} \qquad
    \mathbb{T}_\beta = \prod_{i=1}^g \beta_i
\]
inside the symplectic manifold 
\[
    \mathrm{Sym}^g(\Sigma) = \left ( \Sigma^{\times g} \right)/ S_g
\]
where the action of the symmetric group $S_g$ is given by permutation on the factors. 
Note that $\mathrm{Sym}^g(\Sigma)$ consists of unordered $g$-tuples of points on $\Sigma$. 
The chain complex $\CFhat(\HD)$ is generated by $g$-tuples of intersection points between $\alpha$ and $\beta$ curves. After fixing an almost complex structure, the differential counts pseudoholomorphic disks that miss the divisor of tuples containing the base point $z$. 

\subsubsection{Bordered 3-manifolds and associated bordered Floer modules}

A \emph{bordered 3-manifold} is a compact, oriented 3-manifold $Y$ with parametrized boundary $\partial Y$.  The parametrization is given by \emph{pointed matched circles} $\ZZ_i$, each of which encodes a handle decomposition of the the components $(\partial Y)_i$ of $\partial Y$ via homomorphisms $\phi_i: F(\ZZ_i) \to (\partial Y)_i$.

To a bordered 3-manifold with one parametrized boundary component, presented as a bordered Heegaard diagram $\HD$, Lipshitz, Ozsv\'ath, and Thurston \cite{LOT} construct an $\CA_\infty$-algebra $\CA(\ZZ)$ associated to the pointed matched circle $\ZZ$, and two types of modules: a left differential graded module $\CFDhat(\HD)$ over the algebra $\CA(-\ZZ)$ and a right $\CA_\infty$-module $\CFAhat(\HD)$ over $\CA(\ZZ)$. 
Moreover, in \cite{LOTbimodules}, the same authors show how to associate to a 3-manifold with two parametrized boundary components, presented as a bordered Heegaard diagram $\HD$ with two boundaries, the bimodules $\CFDDhat(\HD)$, $\CFDAhat(\HD)$, and $\CFAAhat(\HD)$. This technology can be extended to form trimodules, such as $\CFDAAhat(\HD)$, for 3-manifolds with three parametrized boundary components; such trimodules are used in the bordered reformulation \cite{LOTborderedss, LOTborderedss2} of the Ozsv\'ath-Szab\'o spectral sequence \cite{OzsvathSzaboss} and will be used in this paper. 

We will use the symbol $\CA$ to represent all $\CA_\infty$ algebras throughout; within context, it should be clear which pointed matched circle $\ZZ$ and surface $F(\ZZ)$ this algebra corresponds to. In particular, we only perform calculations with the torus algebra, which we discuss next.

\subsubsection{The torus algebra}
In this article we work often with surgery along knots, and as such will often work with the \emph{torus algebra}. This is the differential graded algebra $\CA = \CA(\ZZ)$ generated by the idempotents and chords 
\[
    \io0,\io1, \ro{1,2}, \ro{1,3}, \ro{1,4}, \ro{2,3},\ro{2,4}, \ro{3,4}
\]
with structure maps
\begin{align*}
\mu_2(\ro{i,j}, \ro{k,l}) &= 
    \begin{cases}
    \ro{i,l}    & \text{ if } j=k\\
    0           & \text{ if } j\neq k\\
    \end{cases}\\
\mu_2(\io{a},\ro{i,j}) &= 
    \begin{cases}
    \ro{i,j}    & \text{ if } i \equiv (a+1) \mod 2 \\
    0           & \text{ otherwise}
    \end{cases}\\
\mu_2(\ro{i,j},\io{a}) &= 
    \begin{cases}
    \ro{i,j}    & \text{ if } j \equiv (a+1) \mod 2 \\
    0           & \text{ otherwise}
    \end{cases}\\
\mu_n &= 0 \quad \text{for} \quad n \neq 2.
\end{align*}
That is, only $\mu_2$ is nontrivial, and it is given by concatenation. We will use the dot $\cdot$ to symbolize multiplication $\mu_2$ in $\CA$. Though we use the symbol ``$\iota$" for both idempotents in the torus algebra as well as the homotopy involution on the Heegaard Floer chain complex, within context it should be clear what the symbol stands for.

Moreover, for an algebra element $a \in \CA$, we denote the left and right idempotents of $a$ by $\iota_a$ and $_a\iota$, respectively. In other words, $\iota_a\cdot a=a\cdot\iota_a=a$.
In addition, $\iota_a^c$ denotes the complement of an idempotent $\iota_a$: $\io0^c = \io1$ and $\io1^c = \io0$. 

Finally, the dual over $\BF_2$ of $\CA(\ZZ)$ is denoted by $\ovl{\CA(\ZZ)}$. Specifically, it has 
\[
    \io0^*,\io1^*, \ro{1,2}^*, \ro{1,3}^*, \ro{1,4}^*, \ro{2,3}^*, \ro{2,4}^*, \ro{3,4}^*
\]
as a basis. 
Moreover, $\CA(\ZZ)$ acts on $\ovl{\CA(\ZZ)}$ from the left and the right as follows. For an $a^*\in\ovl{\CA(\ZZ)}$ and $b\in \CA(\ZZ)$, the elements $b\cdot a^*$ and $a^*\cdot b$ send $a'\in\CA(\ZZ)$ to $a^*(a'b)$ and $a^*(ba')$, respectively. Thus, the left idempotent $\iota_{a^*}$ of $a^*$ is equal to $_a\iota$, while the right idempotent of $_{a^*}\iota$ of $a^*$ is equal to $\iota_a$.

Over the next few sections we will review some of the bordered modules/bimodules over the torus algebra that we need to work with the involutive bordered Floer homology and compute surgery maps \cite{HendricksLipshitz}.

\subsubsection{Identity bimodules} \label{sec:identity}Let $\ZZ$ be the pointed matched circle for a torus. Then, the mapping cylinder of the identity map from $F(\ZZ)$ to $F(\ZZ)$ is denoted by $\Id_{\ZZ}$. By \cite[Theorem 4]{LOTbimodules}, this bimodule is homotopy equivalent to the \emph{identity  bimodule} $^{\CA(\ZZ)}[\mathrm{Id}_{\CA(\ZZ)}]_{\CA(\ZZ)}$ defined in \cite[Definition 2.2.48]{LOTbimodules} as follows. The bimodule $[\mathrm{Id}_{\CA(\ZZ)}]=\ ^{\CA(\ZZ)}[\mathrm{Id}_{\CA(\ZZ)}]_{\CA(\ZZ)}$ has two generators corresponding to the idempotents $\iota_0$ and $\iota_1$, and $\delta^1_k=0$ for $k\neq 2$. The nontrivial components of the differential $\delta^1_2$ are: 
\begin{align*}
    \iota_0 \otimes \{ \ro{1,2}, \ro{3,4}, \ro{1,4}\} 
    & \mapsto \{ \ro{1,2}, \ro{3,4}, \ro{1,4}\} \otimes \iota_1 \\
    \iota_1 \otimes \ro{2,3} 
    & \mapsto \ro{2,3} \otimes \iota_1\\
    \iota_0 \otimes \ro{1,3} 
    & \mapsto \ro{1,3} \otimes \iota_0\\
    \iota_1 \otimes \ro{2,4} 
    & \mapsto \ro{2,4} \otimes \iota_1
\end{align*}

\subsubsection{The Auroux-Zarev pieces}
\label{sec:AZpieces}
The next bordered bimodules we need for working with Hendricks and Lipshitz's involutive bordered Floer homology are $\CFDAhat(^\alpha\AZ(-\ZZ)^\beta)$ and $ \CFDAhat(^\beta\AZbar(\ZZ)^\alpha)$ corresponding to the Auroux-Zarev pieces $\AZ(-\ZZ)$ and $\AZbar(\ZZ)$, respectively, depicted in Figure \ref{fig:AZbar-AZ-bimod}. We will write the associated 
bimodule generators in terms of elements of the algebra $\CA(\ZZ)$, following
the notation in \cite[Section 2.4]{HendricksLipshitz}. 

First, we recall $\CFDAhat(^\alpha\AZ(-\ZZ)^\beta)$. 
The generators  are of the form $(\io{a})^c \otimes a$: 
\[
{
     \io0 \otimes \{ \io1, \ro{2,3}, \ro{2,4}\}
}
\qquad
{
    \io1 \otimes \{ \io0, \ro{1,2}, \ro{1,3}, \ro{1,4}, \ro{3,4}\}
}
\]
where $a\in \CA(\ZZ)$. See Figure \ref{fig:AZbar-AZ-bimod} for a labeling of the generators by the algebra elements. The map  $\delta_1^1$ is given by:
\begin{align*}
    \io1 \otimes \io0 
        &\mapsto \ro{2,3} \otimes \ro{2,3} \\
    \io1 \otimes \{ \ro{1,2},\ro{1,3},\ro{1,4}\}
        &\mapsto 0 \\
    \io1 \otimes \ro{3,4}
        &\mapsto \ro{2,3} \otimes  \ro{2,4} \\
    \io0 \otimes \io1
        &\mapsto \ro{1,2} \otimes  \ro{1,2} + \ro{3,4}\otimes \ro{3,4} + \ro{1,4} \otimes \ro{1,4}\\
    \io0 \otimes \ro{2,3}
        &\mapsto \ro{1,2} \otimes \ro{1,3} \\
    \io0 \otimes \ro{2,4} 
        &\mapsto \ro{1,2} \otimes \ro{1,4}
\end{align*}

The map $\delta_2^1$ is just right multiplication, i.e.
    \[ 
        (\iota_a \otimes a) \otimes a' \mapsto  \iota_a \otimes (a\cdot a'),
    \]
where $(\iota_a)^c \otimes a$ is a generator, $a' \in \CA$, and $a\cdot a'$ is the product of $a$ and $a'$ in the algebra, viewing $a$ as an algebra element.

Next, we recall the bimodule $ \CFDAhat(^\beta\AZbar(\ZZ)^\alpha)$.
The generators are of the form $(\iota_{a^*})^c\otimes a^*=\ _a\iota \otimes a^*$:
\[
{
    \io1 \otimes \{ \iod0, \rod{1,3}, \rod{2,3}  \}
}
\qquad
{
    \io0 \otimes \{ \iod1, \rod{1,2}, \rod{1,4}, \rod{2,4}, \rod{3,4}  \}
}
\]

The map $\delta_1^1$ is:
\begin{align*}
    \io1 \otimes \iod0 
        & \mapsto 0 \\
    \io1 \otimes \rod{1,3}
        & \mapsto \ro{2,3} \otimes \rod{1,2} \\
    \io1 \otimes \rod{2,3} 
        & \mapsto \ro{2,3} \otimes \iod1\\
    \io0 \otimes \iod1 
        & \mapsto 0 \\
    \io0 \otimes \rod{1,2}
        & \mapsto \ro{1,2} \otimes \iod0\\
    \io1 \otimes \rod{1,4}
        & \mapsto \ro{1,4} \otimes \iod0 + \ro{2,3} \otimes \rod{1,3}\\
    \io0 \otimes \rod{2,4}
        & \mapsto \ro{3,4} \otimes \rod{2,3} \\
    \io0 \otimes \rod{3,4} 
        & \mapsto \ro{3,4} \otimes \iod0\\
\end{align*}

The map $\delta_2^1$ is the right multiplication by $\CA(\ZZ)$, i.e.:
\[ 
    (_a\iota \otimes a^*) \otimes a' \mapsto\  _a\iota \otimes (a^* \cdot a').
\]
Explicitly, the only nonzero components are the following:
\begin{align*}
    (_a\iota \otimes a^*) \otimes a 
        &\mapsto\ _a\iota\otimes\,_a\iota^* \\    
    (\io1 \otimes \rod{1,3}) \otimes \ro{1,2}  
        &\mapsto 
        \io1 \otimes \rod{2,3}\\
    (\io0 \otimes \rod{2,4}) \otimes \ro{2,3}
        &\mapsto \io0 \otimes \rod{3,4} \\
    (\io0 \otimes \rod{1,4}) \otimes \ro{1,2}
        &\mapsto \io0 \otimes \rod{2,4} \\
    (\io0 \otimes \rod{1,4}) \otimes \ro{1,3}
        &\mapsto \io0 \otimes \rod{3,4}
\end{align*}
For all $k\geq 3$, $\delta_k^1 = 0$.

We now pair the two Auroux-Zarev bimodules together; we will need this when we explicitly compute the chain homotopy equivalence $\Omega$ between $[\mathrm{Id}_{\CA(\ZZ)}]$ and 
\[\CFDAhat(\,^\beta\AZbar(\ZZ)^{\alpha}) \DT \CFDAhat(\,^{\alpha}\AZ(-\ZZ)^{\beta})\]
in Section \ref{section-omega}.

\begin{figure}
    \centering
    \scalebox{0.7}{
    \includestandalone{Appendices/AZbar-bimod}
    \includestandalone{Appendices/AZ-bimod}
    }
    \caption{The diagrams $^\beta\AZbar(\ZZ)^\alpha$ and $^\alpha\AZ(-\ZZ)^\beta$, about to be paired.}
    \label{fig:AZbar-AZ-bimod}
\end{figure}

To simplify the notation, we do not write down the idempotents for the generators of $\CFDAhat(^\beta\AZbar(\ZZ)^\alpha)$ and $\CFDAhat(^\alpha\AZ(-\ZZ)^\beta)$. For instance, we denote a generator $(\iota_a)^c\otimes a$ by $a$. With this notation, the generators are of the form $g^* \bt g'$, where $g^*\in\ovl{\CA(\ZZ)}$ and $g\in \CA(-\ZZ)$, such that
 $_{g^*}\iota$ is complementary to $\iota_{g'}$. Note that $_{g^*}\iota = \iota_g$.
Explicitly, the generators are
{
\[
        \{ \iod0, \rod{1,2}, \rod{1,3}, \rod{1,4}, \rod{3,4} \} 
        \bt \{ \io1, \ro{2,3}, \ro{2,4} \}
\]
}
and
{
\[
        \{ \iod1, \rod{2,3}, \rod{2,4} \}
        \bt \{ \io0, \ro{1,2}, \ro{1,3}, \ro{1,4}, \ro{3,4} \},
\]
}
with the appropriate left idempotents. (The left idempotent of $a^* \bt b$ is $(_a \iota)^c$. For example, $\io0 \otimes \rod{1,2} \bt \ro{2,4}$ is a generator.)

We split the computation of $\delta_1^1$ into two pieces. 
The first set of terms comes from rectangles in the diagram that do not hit the boundary. In other words, the coefficient of the image will be the algebra element 1. The nonzero terms of this type are as follows:
\begin{align*}
     \rod{1,2} \bt \io1 &\mapsto   \iod1 \bt \ro{1,2} \\
      \rod{2,3} \bt \io0 &\mapsto  \iod0 \bt \ro{2,3} \\
     \rod{3,4} \bt \io1 &\mapsto   \iod1 \bt \ro{3,4} \\
     \rod{1,2} \bt \ro{2,3} &\mapsto   \iod1 \bt \ro{1,3} \\
     \rod{1,2} \bt \ro{2,4} &\mapsto   \iod1 \bt \ro{1,4} \\
      \rod{2,3} \bt \ro{3,4} &\mapsto  \iod0 \bt \ro{2,4} \\
      \rod{1,3} \bt \io1 &\mapsto  \rod{2,3} \bt \ro{1,2} \\
     \rod{2,4} \bt \io0 &\mapsto   \rod{3,4} \bt \ro{2,3} \\
     \rod{1,4} \bt \io1 &\mapsto    \iod1 \bt \ro{1,4} +  \rod{2,4} \bt \ro{1,2} \\
      \rod{1,3} \bt \ro{2,3} &\mapsto  \rod{2,3} \bt \ro{1,3} \\
     \rod{2,4} \bt \ro{3,4} &\mapsto   \rod{3,4} \bt \ro{2,4} \\
     \rod{1,4} \bt \ro{2,4} &\mapsto   \rod{2,4} \bt \ro{1,4} \\
      \rod{1,3} \bt \ro{2,4} &\mapsto  \rod{2,3} \bt \ro{1,4} \\
     \rod{1,4} \bt \ro{2,3} &\mapsto   \rod{2,4} \bt \ro{1,3} 
\end{align*}

The second set of terms consists of components that do hit the left boundary. In these cases, the differential outputs a non-idempotent algebra element.
\begin{align*}
    \iod0 \bt \{ \io1, \ro{2,3}, \ro{2,4} \}
        &\mapsto 0 \\
    \rod{1,2} \bt \{\io1, \ro{2,3}, \ro{2,4} \}
        &\mapsto \ro{1,2} \otimes \iod0 \bt \{ \io1, \ro{2,3}, \ro{2,4} \} \\
      \rod{1,3} \bt \{\io1, \ro{2,3}, \ro{2,4} \}  
        &\mapsto (\ro{1,3} \otimes \iod0  + \ro{2,3} \otimes \rod{1,2} ) 
            \bt \{ \io1, \ro{2,3}, \ro{2,4} \} \\
     \rod{1,4} \bt \{\io1, \ro{2,3}, \ro{2,4} \}
        &\mapsto (\ro{1,4} \otimes \iod0 + \ro{2,4} \otimes \rod{1,2} + \ro{3,4} \otimes \rod{1,3})
            \bt \{ \io1, \ro{2,3}, \ro{2,4} \} \\
     \rod{3,4} \bt \{\io1, \ro{2,3}, \ro{2,4} \}
        &\mapsto \ro{3,4} \otimes \iod0 \bt \{ \io1, \ro{2,3}, \ro{2,4} \} \\
     \iod1 \bt \{ \io0, \ro{1,2}, \ro{1,3}, \ro{1,4}, \ro{3,4} \}
        &\mapsto 0 \\
      \rod{2,3} \bt \{ \io0, \ro{1,2}, \ro{1,3}, \ro{1,4}, \ro{3,4} \}
        &\mapsto \ro{2,3} \otimes \iod1 
            \bt  \{ \io0, \ro{1,2}, \ro{1,3}, \ro{1,4}, \ro{3,4} \} \\
     \rod{2,4} \bt  \{ \io0, \ro{1,2}, \ro{1,3}, \ro{1,4}, \ro{3,4} \}
        &\mapsto ( \ro{2,4} \otimes \iod1 + \ro{3,4} \otimes \rod{2,3}) 
            \bt  \{ \io0, \ro{1,2}, \ro{1,3}, \ro{1,4}, \ro{3,4} \}
\end{align*}

To compute $\delta_2^1$, we first note that the only terms come from rectangles that hit the right boundary only; this means $\delta_2^1$ will only output the algebra element 1 on the left. In fact, $\delta_2^1$ is given by right multiplication:
\[
    g^* \bt g' \otimes a \mapsto g^* \bt (g'\cdot a)
\]
(the left idempotents have been suppressed).
Note that above, $a$ must be a non-idempotent element of $\CA$. 
For $k\geq 3$, $\delta^1_k = 0$.

\subsection{Branched double cover spectral sequence}\label{sec:borderedss}

Given a link $L$ in $S^3$, Ozsv\'{a}th and Szab\'{o} construct a spectral sequence whose second page is isomorphic to the reduced Khovanov homology $\widetilde{\Kh}(\ovl{L})$ of the mirror image of $L$, and which converges to the Heegaard Floer homology $\HFhat(\Sigma(L))$ of the branched double cover of $L$ \cite{OzsvathSzaboss}. Both homology theories are taken with $\BF = \BF_2$ coefficients. To compute this spectral sequence, Lipshitz, Ozsv\'{a}th, and Thurston constructed a bordered Floer homology description of this spectral sequence \cite{LOTborderedss, LOTborderedss2}. In this section, we will review this bordered description. 

Let $D$ be a diagram for the link $L$ which is the plat closure of a braid with $2k$ strands and $n$ crossings. Cut $S^3$ into $n+2$ pieces by slicing along $n+1$ nested $2$-dimensional spheres, and decompose the diagram as
$D=B_0B_1\cdots B_{n}B_{n+1}$ where $B_0$ and $B_{n+1}$ are $k$ cups and caps in $B^3$, respectively. Each $B_i$ is an elementary braid of $2k$ strands in $S^2 \times [0,1]$ for any $1\le i\le n$. Figure \ref{fig:elembraid} depicts an example of an elementary braid of six strands.

\begin{figure} [ht!]
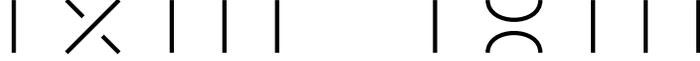

\includestandalone{images/elem-braid}
\caption{Left: An elementary braid of six strands. Right: The anti-braid-like resolution of the braid.} 
\label{fig:elembraid}
\end{figure}

The branched double cover of each slice $S^2$ with $2k$ branch points $S^2\cap D$ is a surface $\Sigma$ of genus $k-1$. Further, the branched double covers $\Sigma(B_0)$ and $\Sigma(B_{n+1})$ are handlebodies with boundary $\Sigma$,  while $\Sigma(B_i)$ is the mapping cylinder of a positive or negative Dehn twist, depending on the type of crossing, along a homologically essential curve $\gamma_i\subset \Sigma$. Lipshitz, Ozsv\'{a}th, and Thurston choose a particular parametrization of $\Sigma$ by a \emph{linear} pointed match circle $\ZZ$ \cite[\S 5.1]{LOTborderedss} for which the mapping cylinder of the Dehn twist along $\gamma_i$ will have a convenient representation for all $i=1,\cdots, n$. This turns every $\Sigma(B_i)$ into a bordered $3$-manifold. Moreover, $\Sigma(B_0)$ and $\Sigma(B_{n+1})$ are \emph{plat} handlebodies in the sense of \cite[\S 5.2]{LOTborderedss}. By the pairing theorem for bordered Heegaard Floer homology \cite[\S 7]{LOT},

\begin{equation}\label{eq:pairing}
    \CFhat(\Sigma(D)) \simeq 
    \CFAhat(\Sigma(B_0))  
    \DT  \CFDAhat(\Sigma(B_1))  \DT  \cdots  
    \DT  \CFDAhat(\Sigma(B_n))
    \DT  \CFDhat(\Sigma(B_{n+1})).
\end{equation}
 
For $i=1,\cdots, n$, the crossing in $B_i$ can be resolved in two ways: a $0$-resolution denoted by $B_i^0$ and a $1$-resolution denoted by $B_i^1$. Depending on the type of crossing, one of the resolutions is \emph{braid-like} meaning that the result is the identity braid of $2k$ strands, while the other resolution is \emph{anti-braid-like}, as depicted in Figure~\ref{fig:elembraid}. For instance, suppose $\Sigma(B_i)$ is the mapping cylinder of the positive Dehn twist along $\gamma_i\subset \Sigma$. Then, $B_i^0$ and $B_i^1$ are braid-like and anti-braid-like, respectively. So, $\Sigma(B_i^0)=[0,1]\times \Sigma$, and $\Sigma(B_i^1)$ is obtained by performing $0$-surgery on $[0,1]\times \Sigma$ along $\{\frac{1}{2}\}\times\gamma_i$ equipped with the surface framing. A similar statement holds when $\Sigma(B_i)$ corresponds to the negative Dehn twist,  with the role of the $0$- and $1$-resolutions reversed.  By \cite[Theorem 2]{LOTborderedss}, $\CFDAhat(B_i)$ is homotopy equivalent to the mapping cone of a bimodule morphism $f_i$ from $\CFDAhat(B_i^1)$ to $\CFDAhat(B_i^0)$. 
 
Therefore, the right-hand side of Equation \ref{eq:pairing} is $\{0,1\}^n$-filtered, i.e.\ filtered with respect to the grading $|v|$ on the vertices of the cube. 
At each vertex $v=(v_1, \dots, v_n) \in \{0,1\}^n$, the pairing theorem in bordered Floer homology gives
\[
    \CFAhat(\Sigma(B_0))  \DT  \CFDAhat(\Sigma(B_1^{v_1}))  
    \DT  \cdots  
    \DT \CFDAhat(\Sigma(B_n^{v_n})) 
    \DT \CFDhat(\Sigma(B_{n+1})) \simeq\CFhat(\Sigma(D_v))
\]
where $D_v$ is the unlink obtained from $L$ by resolving all of the crossings as specified by $v$.
By construction, the induced spectral sequence converges to $\HFhat(\Sigma(D))$.
The first page $E^1$ of this spectral sequence is isomorphic to 
\[\bigoplus_{v\in\{0,1\}^n}\HFhat(\Sigma(D_v))\cong \bigoplus_{v\in\{0,1\}^n}\widetilde{\mathrm{CKh}}(D_v),\] 
where the isomorphism holds because the branched double cover $\Sigma(D_v)$ is a connected sum of copies of $S^1\times S^2$. Next, the differential on the $E_1$ page is given by $\mathrm{Id} \DT  f_i \DT \mathrm{Id}$ for $i=1,\cdots, n$, where the bimodule morphism $f_i$ will be described soon.  By \cite[Proposition 4.1]{LOTborderedss} and \cite[Theorem 6.3]{OzsvathSzaboss} these differentials are isomorphic to the differentials on the reduced Khovanov homology chain complex of $\ovl{D}$. Thus, the $E^2$ page is isomorphic to the reduced Khovanov homology $\widetilde{\mathrm{Kh}}(\ovl{L})$ of the mirror image of $L$.
 
In the rest of this section, we will briefly recall the construction of the bimodule morphism $f_i$. Let $\gamma$ be a simple closed curve in $\Sigma=F(\ZZ)$ and $Y_{\Id}$, $Y_{D_\gamma}$ and $Y_{D_{\gamma}^{-1}}$ be the bordered manifolds defined as the mapping cylinders of identity, positive, and negative Dehn twists along $\gamma\subset F(\ZZ)$, respectively. Thus, $Y_{\Id}=[0,1]\times F(\ZZ)$, while $Y_{D_{\gamma}}$ and $Y_{D_\gamma^{-1}}$ are obtained from $Y_{\Id}$ by performing $-1$ and $+1$ surgery (respectively) on $Y_\Id$ along the knot  $\{\frac{1}{2}\}\times\gamma$ equipped with surface framing induced from $\{\frac{1}{2}\}\times\Sigma$. Denote by $Y_{0(\gamma)}$ the bordered $3$-manifold obtained from $Y_{\Id}$ by performing $0$-surgery along the knot $\{\frac{1}{2}\}\times\gamma$.

Remove a tubular neighborhood $\nu(\{\frac{1}{2}\}\times\gamma)$ from $Y_{\Id}$, and equip the new torus boundary of $Y_{\Id}\setminus \nu(\{\frac{1}{2}\}\times\gamma)$ with a framing such that $Y_{D_{\gamma}}$, $Y_{0(\gamma)}$ and $Y_{\Id}$ are obtained by filling this torus boundary with slopes $0$, $\infty$, and $-1$, respectively. Connect the torus boundary of $Y_\Id \setminus \nu(\frac{1}{2} \times \gamma)$ via a framed arc to one of the other two boundary components, forming a triply bordered $3$-manifold; let $\HD$ be a bordered Heegaard diagram for it. Then $\HD\cup \HD_{\infty}$, $\HD\cup\HD_{-1}$, and $\HD\cup\HD_{0}$ are bordered Heegaard diagrams for $Y_{0(\gamma)}$, $Y_{\Id}$, and $Y_{D_{\gamma}}$, respectively. Here and throughout the paper, $\HD_{i}$ denotes the standard bordered Heegaard diagram for the solid torus with $i$-framing for $i= \infty, -1, 0$; see Figure \ref{fig:CFD-tori}.
   
\begin{figure}
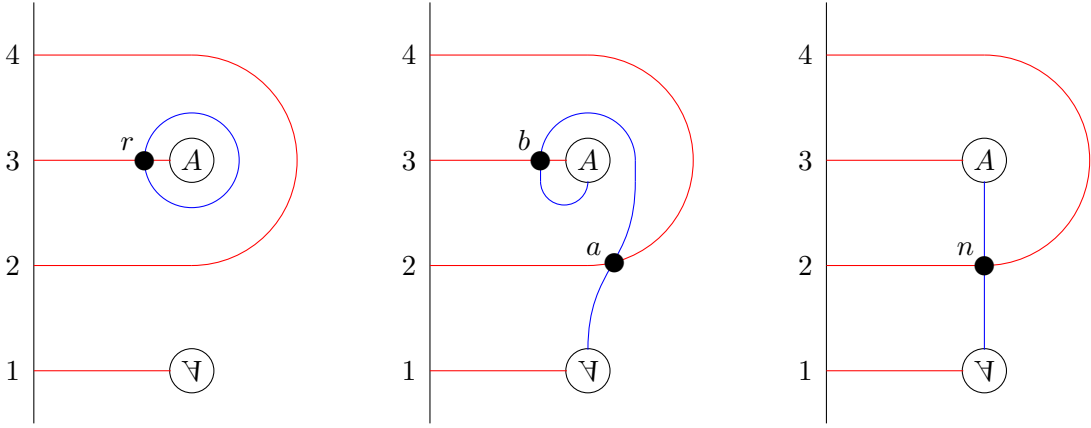

    \centering
    \includestandalone{images/CFD-tori}
    \caption{From left to right, $\alpha$-bordered diagrams for the $\infty$-, $(-1)$-, and $0$-framed solid tori.}
    \label{fig:CFD-tori}
\end{figure}
   
There is a morphism $\phi$ from $\CFDhat(\HD_{\infty})$ to $\CFDhat(\HD_{-1})$ so that $\CFDhat(\HD_{0})$ is homotopy equivalent to  $\cone(\phi)$; see \cite[\S 11.2]{LOT}. 
As in \cite[Proof of Theorem 5]{LOTborderedss}, there is a homotopy equivalence
\begin{align*}
\CFDAhat(Y_{D_\gamma})&\simeq\CFDAhat(\HD\cup\HD_0)\\
&\simeq \CFDAAhat(\HD) \DT \CFDhat(\HD_0)\\
&\simeq\CFDAAhat(\HD) \DT \cone(\phi \colon \CFDhat(\HD_\infty)\to \CFDhat(\HD_{-1}))\\
&\simeq\cone\left(\mathrm{Id}_{\CFDAAhat(\HD)} \DT \phi \right)\\
&\simeq\cone\left(f_{\gamma}^+:\CFDAhat(Y_{0(\gamma)})\to\CFDAhat(Y_{\Id})\right).
\end{align*}
Similarly, $\CFDAhat(Y_{D_{\gamma}^{-1}})$ is homotopy equivalent to the mapping cone of a morphism $f_{\gamma}^-$ from $\CFDAhat(Y_{\Id})$ to $\CFDAhat(Y_{0(\gamma)})$. We define $f_i$  to be equal to $f_{\gamma_i}^+$ or $f_{\gamma_i}^-$, depending on the type of crossing in $B_i$.

We will apply this construction to the branched double covers $\Sigma(B_i)$, for $i = 1, \dots, n$, which are each mapping cylinders of Dehn twists along curves $\gamma_i$. 

\subsection{Involutive Heegaard Floer homology}

\subsubsection{Construction}
In involutive Heegaard Floer homology \cite{HendricksManolescu}, Hendricks and Manolescu construct the involutive chain map $\iota$ as follows. Start with a Heegaard diagram $\HD = (\Sigma, {\alpha}, {\beta}, z)$ for a closed, oriented 3-manifold $Y$ (we will focus on the case $Y = \Sigma(L)$). Let $\ovl{\ovl{\HD}} = (-\Sigma, {\beta}, {\alpha}, z)$ be the Heegaard diagram obtained from $\HD$ by reversing the roles of the $\alpha$ and $\beta$ curves, and reversing the orientation of the Heegaard surface. There is a natural isomorphism $\eta_\HD: \CFhat({\HD}) \to \CFhat(\ovl{\ovl{\HD}})$. Since ${\HD}$ and $\ovl{\ovl{\HD}}$ represent the same 3-manifold, there is a sequence of Heegaard moves that transform the diagram $\ovl{\ovl{\HD}}$ into $\HD$.  This sequence of moves induces a chain homotopy equivalence $\Phi_\HD: \CFhat(\ovl{\ovl{\HD}}) \to \CFhat({\HD})$ \cite{OS} which is well-defined up to chain homotopy \cite{OSfour, JuhaszThurston, HendricksManolescu}. The map $\iota: \CFhat({\HD}) \to \CFhat({\HD})$ defined by $\iota = \Phi_\HD \circ \eta_\HD$  is an involution up to homotopy, i.e.\ $\iota^2 \simeq \text{id}$. 

The involutive Heegaard Floer complex associated to $\HD$ is defined as 
\[
    \CFIhat(\HD)= \cone \left (
        \CFhat(\HD) \map{Q(id + \iota)} Q\cdot\CFhat(\HD)
    \right )
\]
and its homology is $\HFIhat(Y) = H_*(\CFIhat(\HD))$.

\begin{proposition}\label{prop:vertexmod}
Let $Y \cong \#^k (S^1 \times S^2)$. Then $\HFIhat(Y)$ is a rank one, free module over the ring 
\[ \Lambda^*H_1(Y) \otimes \BF[Q]/(Q^2) \]
generated by some class $\theta \in \HFIhat(Y).$
\end{proposition}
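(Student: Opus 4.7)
The plan is to reduce the statement to two inputs: the classical computation of $\HFhat(Y)$ as a free rank-one $\Lambda^* H_1(Y)$-module, and the fact that the homotopy involution $\iota$ is chain homotopic to the identity on $\CFhat(Y)$. Granting both, the mapping cone defining $\CFIhat(Y) = \cone(Q(\mathrm{id}+\iota))$ collapses (on the chain level) to $\CFhat(Y)\otimes_\BF \BF[Q]/(Q^2)$, whose homology is manifestly free of rank one over $\Lambda^* H_1(Y)\otimes \BF[Q]/(Q^2)$.

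For the first input, I would use the standard genus-$k$ Heegaard diagram $\HD$ for $Y = \#^k\circxsphere$ in which $\alpha_i$ is a small Hamiltonian perturbation of $\beta_i$ for each $i$, so that $\CFhat(\HD)$ has $2^k$ generators and vanishing differential. The Ozsv\'ath--Szab\'o computation identifies this with $\Lambda^* H_1(Y;\BF)$ as a module over $\Lambda^* H_1(Y)$, free of rank one, generated by a distinguished top-degree class.

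The main work is to establish $\iota \simeq \mathrm{id}$. I plan to exploit the manifest $\Z/2$ symmetry of the standard diagram $\HD$ that swaps the roles of $\alphas$ and $\betas$: the conjugate diagram $\ovl{\ovl{\HD}}=(-\Sigma,\betas,\alphas,z)$ is carried back to $\HD$ by an obvious self-diffeomorphism of the underlying handlebody decomposition. Tracking this identification through the definition $\iota = \Phi_{\HD}\circ \eta_{\HD}$ should produce an explicit chain homotopy $H$ with $\partial H + H\partial = \mathrm{id}+\iota$. I expect this chain-homotopy verification to be the main obstacle; as a backup, one can argue indirectly that, since the differential on $\CFhat(\HD)$ vanishes for the standard diagram, $\iota_*$ is forced to be a grading-preserving, order-two $\Lambda^* H_1$-module automorphism fixing the top generator, hence the identity on homology, and then promote this to a chain-level null-homotopy of $\mathrm{id}+\iota$ using the vanishing differential.

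Given such an $H$, the last step is routine: I would define
\[
    \Psi \colon \CFhat(\HD)\otimes_\BF \BF[Q]/(Q^2) \longrightarrow \CFIhat(\HD),
    \qquad \Psi(x) = (x,\, QHx), \qquad \Psi(Qx) = (0,\,Qx),
\]
and verify in characteristic $2$ that it is a chain isomorphism intertwining the actions of $\Lambda^* H_1(Y)$ and of $Q$. Taking homology then yields
\[
    \HFIhat(Y) \;\cong\; \HFhat(Y)\otimes_\BF \BF[Q]/(Q^2) \;\cong\; \Lambda^* H_1(Y)\otimes_\BF \BF[Q]/(Q^2),
\]
free of rank one, with generator $\theta$ the image of $1\otimes 1$.
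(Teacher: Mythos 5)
Your overall strategy (establish that $\iota\simeq\mathrm{id}$ on $\CFhat(Y)$, deduce $\HFIhat(Y)\cong\HFhat(Y)\otimes\BF[Q]/(Q^2)$, quote the classical computation of $\HFhat(\#^k\circxsphere)$) is the same as the paper's, and your final step (the chain isomorphism $\Psi$) is fine. The gap is in the middle: you never actually pin down that $\iota\simeq\mathrm{id}$, and the two routes you sketch both have unfilled holes.

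Your primary route asks for an explicit null-homotopy of $\mathrm{id}+\iota$ coming from the $\Z/2$ symmetry of the standard genus-$k$ diagram. You correctly flag this as ``the main obstacle,'' but then don't close it; the composite $\Phi_\HD\circ\eta_\HD$ is not literally the identity just because there is a self-diffeomorphism of the handlebody decomposition, and there is no obvious formula for the required $H$. Your backup route works with the whole genus-$k$ diagram at once and asserts that $\iota_*$ is a $\Lambda^*H_1(Y)$-\emph{module} automorphism; since a module automorphism of a free rank-one module fixing the generator is the identity, this would close the gap (and, because $\partial=0$, would even give $\iota=\mathrm{id}$ on the chain level). But the $\Lambda^*H_1$-equivariance of $\iota$ is precisely the nontrivial input here, and you cite nothing for it. Grading-preservation alone is not enough once $k\geq 2$: the standard diagram has $\binom{k}{i}$ generators in each Maslov grading, so $\iota$ could a priori act by a nontrivial grading-preserving permutation/linear map.

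The paper avoids both difficulties by first treating the genus-$1$ case, where $\CFhat(\HD)$ has exactly \emph{two} generators in \emph{distinct} Maslov gradings; there, grading-preservation genuinely forces $\iota=\mathrm{id}$ with no extra input. It then promotes this to $\#^k\circxsphere$ by citing the connected-sum formula for $\iota$ \cite[Prop.~5.1]{HMZ-connected-sum}, which says $\iota_{\#^k}$ is (up to homotopy) the tensor of the $\iota$'s on the summands. That connected-sum theorem is exactly the ingredient that replaces your unproved $\Lambda^*H_1$-equivariance; you need either to invoke it as the paper does, or to supply a reference/proof of the equivariance of $\iota$ with respect to the $\Lambda^*H_1$-action, before your argument is complete.
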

\begin{proof}
Let $\HD$ be the standard genus $1$ Heegaard diagram for $S^1\times S^2$ with one $\alpha$-circle and one $\beta$-circle so that $\beta$ is obtained from $\alpha$ by a small Hamiltonian isotopy and intersects it transversely in two points. Moreover, the base point $z$ is away from the support of the isotopy. Then, $\CFhat(\HD)$ has two generators in different Maslov gradings, and since $\iota$ is grading preserving, $\iota=\mathrm{Id}$ on $\CFhat(\HD)$. Thus, $\iota$ is equal to the identity on $\CFhat(\#^k\HD)$ as well \cite[Prop.~5.1]{HMZ-connected-sum}.
As a result,
\[\HFIhat(Y)\cong \HFhat(Y)\otimes \BF[Q]/(Q^2).\]
By the computations in \cite[\S 3.1]{OS-3m2} $\HFhat(Y)$ is a rank one free module over $\Lambda^*H_1(Y)$ generated by the homology class with largest Maslov grading.
\end{proof}

\subsubsection{Involutive surgery exact triangle}\label{sec:surgery}

This section is an overview of Hendricks and Lipshitz's surgery exact triangle for involutive bordered Floer homology from \cite[\S 7]{HendricksLipshitz}. Our explicit calculations appear later in \S \ref{sec:explicit-calcs}.

Let $K$ be a knot in a 3-manifold $Y$. Let $Y_\circ$ denote the $\circ$-surgery of $Y$ along $K$, for $\circ \in \{\infty, -1, 0\}$. Let $X = Y \wo \nu(K)$ (``exterior") denote the complement of a neighborhood of $K$ in $Y$. 
Let $\HD_X$ be a diagram for $X$ as a bordered manifold, and let $\HD_{\circ}$ be a diagram for the $\circ$-framed solid torus. 

Hendricks and Lipshitz construct a surgery exact triangle \cite[Theorem 7.1]{HendricksLipshitz} for involutive Floer homology using bordered Floer homology. Specifically, they prove that under the identification of 
\[
    \CFhat(Y_{\circ}) = \CFhat(\HD_{X}\cup\HD_{\circ})\simeq\CFAhat(\HD_{X})\DT\CFDhat(\HD_{\circ})
\]
using the pairing theorem, $\CFIhat(Y_0)$ is homotopy equivalent to the mapping cone of the chain map $\begin{bmatrix}\mathrm{Id}^2\DT\phi&0\\QG'&\mathrm{Id}^2\DT\phi\end{bmatrix}$ as in Figure \ref{fig:involutive-SM}.
\begin{figure}
\includestandalone{images/involutive-SM}
\caption{The bordered involutive short exact sequence.}
\label{fig:involutive-SM}
\end{figure}
The homotopy $G'$ is defined as the composition of the maps in Figure \ref{fig:involutive-surgery-diagram}. The components of the composition are as follows:
\begin{itemize}
    \item The chain map $\eta$ is the obvious one induced by reversing the roles of the $\alpha$- and $\beta$-curves (and arcs), and reversing the orientation on the Heegaard surfaces.
    \item 
    The chain homotopy equivalence $\Omega$ is a composition of the following identifications and chain homotopy equivalences:
    \begin{align*}
    \CFAhat(\HD_X) \DT \CFDhat(\HD_\infty)
        &= \CFAhat(\HD_X) \DT [\mathrm{Id}_{\CA(\ZZ)}] \DT \CFDhat(\HD_\infty) \\
        &\map{\Omega_1} \CFAhat(\HD_X) \DT \CFDAhat(\Id_\ZZ) \DT \CFDhat(\HD_\infty) \\
        &\map{\Omega_2} \CFAhat(\HD_X) \DT \CFDAhat(\ovl{\AZ})\DT\CFDAhat(\AZ) \DT \CFDhat(\HD_\infty)
    \end{align*}
Here, $\Omega_1$ is induced by the homotopy equivalence between $[\mathrm{Id}_{\CA(\ZZ)}]$ and $\CFDAhat(\Id_{\ZZ})$. Moreover, $\ovl{\AZ}\cup\AZ$ is a $\beta$-$\beta$ bordered Heegaard diagram for $\Id_{\ZZ}$. Thus, 
\[\CFDAhat(\Id_{\ZZ})\simeq \CFDAhat(\ovl{\AZ})\DT\CFDAhat(\AZ)\]
and $\Omega_2$ is the homotopy equivalence induced by the Heegaard moves. Consequently, $\Omega=\Omega_2\circ\Omega_1$ is induced by this homotopy equivalence between $[\mathrm{Id}_{\CA(\ZZ)}]$ and $\CFDAhat(\ovl{\AZ})\DT\CFDAhat(\AZ)$. Note that by \cite[Corollary 4.6]{HendricksLipshitz} such homotopy equivalence is unique up to homotopy and we will compute an explicit representative for it in Section \ref{section-omega}. Abusing the notation, ths homotopy equivalence between $[\mathrm{Id}_{\CA(\ZZ)}]$ and $\CFDAhat(\ovl{\AZ})\DT\CFDAhat(\AZ)$ is denoted by $\Omega$ as well.

\item $\ovl{\ovl{\HD_X}}\cup\ovl{\AZ}$ and $\HD_X$ are both bordered Heegaard diagrams for $X$ and the homotopy equivalence $\h$ from $\CFAhat(\ovl{\ovl{\HD_X}})\DT\CFDAhat(\ovl{\AZ})$ to $\CFAhat(\HD_X)$ is induced by the Heegaard moves.

\item In the proof of \cite[Lemma 7.2]{HendricksLipshitz} Hendricks and Lipshitz explicitly define the map $G$ from $\CFDAhat(\AZ)\DT\CFDhat(\ovl{\ovl{\HD_{\infty}}})$ to $\CFDhat(\HD_{-1})$, as follows. Bordered Heegaard diagrams $\HD_{\infty}$ and $\HD_{-1}$ are depicted in Figure \ref{fig:CFD-tori}. Using these diagrams, it is easy to see that  $\CFDhat(\ovl{\ovl{\HD_{\infty}}})$ has one generator $\ovl{\ovl{r}}$ and $\delta^1(\ovl{\ovl{r}})=\rho_{2,4}\otimes\ovl{\ovl{r}}$. Moreover, $\CFDhat(\HD_{-1})$ has two generators $a$ and $b$ and 
\[\delta^1(a)=(\rho_{1,2}+\rho_{3,4})\otimes b,\quad\quad\quad \delta^1(b)=0.\]
So, the generators of $\CFDAhat(\AZ)\DT\CFDhat(\ovl{\ovl{\HD_{\infty}}})$ are 
\[\{\iota_1,\rho_{1,2},\rho_{1,4}, \rho_{2,4}, \rho_{3,4} \}\bt \ovl{\ovl{r}}\]
and the nonzero terms of the differential $\delta^1$ are
\begin{align*}
    \io1\bt\ovl{\ovl{r}} & \mapsto \rho_{2,4}\bt \ovl{\ovl{r}}+ \rho_{1,2}\otimes (\rho_{1,2}\bt \ovl{\ovl{r}})+ \rho_{1,4}\otimes (\rho_{1,4}\bt \ovl{\ovl{r}})+\rho_{3,4}\otimes (\rho_{3,4}\bt \ovl{\ovl{r}})\\
    \rho_{1,2}\bt \ovl{\ovl{r}} & \mapsto \rho_{1,4}\bt \ovl{\ovl{r}}\\
    \rho_{2,4}\bt \ovl{\ovl{r}}& \mapsto \rho_{1,2}\otimes (\rho_{1,4}\bt \ovl{\ovl{r}})\\
    \rho_{3,4}\bt \ovl{\ovl{r}}& \mapsto \rho_{2,3}\otimes (\rho_{2,4}\bt \ovl{\ovl{r}}).
\end{align*}
Finally, the nonzero terms in the map $G$ are:
\begin{align*}
\rho_{2,4}\bt \ovl{\ovl{r}}&\mapsto a\\
\rho_{1,4}\bt \ovl{\ovl{r}}&\mapsto b\\
\rho_{1,2}\bt \ovl{\ovl{r}}&\mapsto \rho_{2,4}\otimes b
\end{align*}
\end{itemize}

\begin{figure}[h]
\includestandalone{images/involutive-surgery-diagram-small}
\caption{The composition of maps giving the homotopy $G'$.}
\label{fig:involutive-surgery-diagram}
\end{figure}

\section{Explicit computation of surgery maps}\label{sec:explicit-calcs}

We now compute the involutive surgery map for two specific knots in $\#^kS^1\times S^2$: the unknot, and the knot $S^1\times \{\mathit{pt}\}$ in one of the $S^1\times S^2$ summands. 
Specifically, we will prove the following proposition. 

\begin{proposition}\label{prop:surgerycomp}
Suppose $Y=\#^kS^1\times S^2$ and $K\subset Y$ is a knot specified by $S^1\times \{pt\}$ in one of the $S^1\times S^2$ summands. 
Let the framing induced by the product structure on $K$ be the $(-1)$-framing on $K$.
Then, $Y_{-1}(K)\cong \#^{k-1}(S^1\times S^2)$ and $H_1(Y_{-1}(K))\cong H_1(Y)/[K]$. Under the identification of $\HFIhat(Y)$ with $\Lambda^*H_1(Y) \otimes \mathbb{F}[Q]/(Q^2)$ under Proposition \ref{prop:vertexmod}, the surgery map \cite[Theorem 7.1]{HendricksLipshitz}
\[f: \HFIhat(Y)\to\HFIhat(Y_{-1}(K))\]
is the $\mathbb{F}[Q]/(Q^2)$-module homomorphism induced by the composition of the quotient map $q \colon H_1(Y) \rightarrow H_1(Y)/[K]$ followed by the isomorphism $H_1(Y)/[K]\cong H_1(Y_{-1}(K))$.

Dually, suppose $U\subset Y$ is the unknot, and it is framed so that $Y_{-1}(U)\cong \#^{k+1}(S^1\times S^2)$. Then, $H_1(Y)$ naturally injects into $H_1(Y_{-1}(U))$. Let $K'\subset Y_{-1}(U)$ be the core of the attached solid torus. Under the identification of Proposition \ref{prop:vertexmod} the surgery map \cite[Theorem 7.1]{HendricksLipshitz}
\[f': \HFIhat(Y)\to\HFIhat(Y_{-1}(U))\]
is given by $f'(\xi)=i(\xi)\wedge [K']+Qi(\xi)$ where the map $i$ is the inclusion of $H_1(Y)$ in $H_1(Y_{-1}(U))$. 
\end{proposition}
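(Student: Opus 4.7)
The plan is to apply the Hendricks--Lipshitz involutive surgery exact triangle of Section \ref{sec:surgery} to bordered decompositions of each knot exterior, and to separately identify the two components --- the non-involutive part $\mathrm{Id}^2 \DT \phi$ and the involutive correction $QG'$ --- of the resulting surgery map. By Proposition \ref{prop:vertexmod}, $\HFIhat$ of both source and target are free rank-one modules over $\Lambda^* H_1 \otimes \mathbb{F}[Q]/(Q^2)$, so it suffices to compute the map on a top-graded generator $\theta$ and extend equivariantly. Since each surgery is supported locally --- in one $S^1 \times S^2$ summand for $K$, and in a ball for $U$ --- one may reduce to the cases $Y = S^1 \times S^2$ and $Y = S^3$, respectively.

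For $K = S^1 \times \{\mathit{pt}\} \subset S^1 \times S^2$, the exterior is a solid torus, so the bordered pieces appearing in the exact triangle are standard. The non-involutive surgery map $(\mathrm{Id}^2 \DT \phi)_* \colon \HFhat(S^1 \times S^2) \to \HFhat(S^3)$ is the usual two-handle cobordism map, which on $\Lambda^* H_1$ realizes the quotient $q_*$ killing $[K]$; this follows from the standard $H_1$-action computation in \cite{OS-3m2}. To compute $G'$ I would then chase a top generator through the four-step composition $G \circ \hcomp^{-1} \circ \Omega \circ \eta$ of Figure \ref{fig:involutive-surgery-diagram}, using the explicit representative of $\Omega$ built in Section \ref{section-omega}. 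I expect $G'$ to act trivially on homology in this case because no new $H_1$ generator is produced and the involution $\iota$ itself is the identity on $\HFhat(\#^\bullet(S^1 \times S^2))$ by Proposition \ref{prop:vertexmod}.

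For the unknot $U \subset S^3$, the exterior is again a solid torus, but now the $(-1)$-filling introduces a new $H_1$ generator $[K']$. The non-involutive part $(\mathrm{Id}^2 \DT \phi)_*$ is the two-handle cobordism map $\HFhat(S^3) \to \HFhat(S^1 \times S^2)$, which sends the generator $1$ to the class corresponding to $[K'] \in \Lambda^1 H_1$; extended $\Lambda^* H_1$-equivariantly this is exactly $\xi \mapsto i(\xi) \wedge [K']$. The involutive correction $G'$, computed by the same generator chase, is nontrivial here: I expect it to realize the natural inclusion $i \colon \HFhat(S^3) \hookrightarrow \HFhat(S^1 \times S^2)$ sending $1$ to $1$, contributing the $Q \cdot i(\xi)$ term. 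Intuitively, in contrast to the $K$ case, the $(-1)$-surgery creates room at the $Q$-level for the involution-related homotopy to deposit a nonzero class.

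The main obstacle is the explicit generator-level computation of $G'$, which requires pairing four bordered chain maps on distinct Heegaard diagrams and tracking all algebra-element coefficients through the $\DT$-products. Because \cite[Corollary 4.6]{HendricksLipshitz} only determines $\Omega$ up to homotopy, we must use the explicit representative fixed in Section \ref{section-omega} and then verify that the final answer descends to a well-defined map on $\HFIhat$, independent of the choice of representative.
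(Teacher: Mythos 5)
Your proposal follows essentially the same strategy as the paper's proof: apply the Hendricks--Lipshitz involutive surgery triangle, identify the non-involutive part $(\mathrm{Id} \DT \phi)_*$ with the quotient map $q$ (resp.\ $\xi \mapsto i(\xi)\wedge[K']$) via \cite[Proposition 6.1]{OzsvathSzaboss}, observe that $\iota=\mathrm{id}$ on all the relevant $\CFhat$ complexes (Proposition \ref{prop:vertexmod}) so that $G'$ is in fact a chain map, and then determine $G'_*$ by a generator chase through $\eta$, $\Omega$, $\h$, and $G$. Your predictions for $G'_*$ --- zero in the $K$ case, the inclusion $i$ in the $U$ case --- agree with what the paper establishes.

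The genuine gap is that you do not carry out the generator chase, and the heuristic you offer for $G'_*=0$ in the $K$ case (``no new $H_1$ generator is produced and $\iota=\mathrm{id}$'') is not a proof. Indeed, $\iota=\mathrm{id}$ holds equally in the $U$ case, where $G'_*=i\neq 0$, so that argument cannot by itself distinguish the two situations. The paper closes this gap by explicitly writing down the $A_\infty$-module homotopy equivalence $\h = (\hcomp_1,\hcomp_2,\ldots)$ from $\CFAhat(\doublebar{\HD}) \DT \CFDAhat(\ovl{\AZ})$ to $\CFAhat(\HD)$ for $\HD=\HD_K$ and $\HD=\HD_U$, and then tracing $x|r$, $y|r$ (for $K$) and $a|r$ (for $U$) through the full composition $(\mathrm{Id}\DT G)\circ(\h\DT\mathrm{Id}^2)\circ(\mathrm{Id}\DT\Omega\DT\mathrm{Id})\circ\eta$ to find $G'(x|r)=G'(y|r)=0$ and $G'(a|r)=b|a$. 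This explicit computation is the core content of the proposition and is not replaceable by a structural argument. A secondary caveat: the proposed ``reduction to $Y=S^1\times S^2$ or $S^3$'' presumes a locality or connected-sum formula for the involutive surgery map that is not established; the paper instead works directly with the genus-$k$ bordered diagrams $\HD_K$, $\HD_U$, and the locality of the computation is a consequence of the explicit choices (the Heegaard moves realizing $\h$ are supported near the bordered boundary, so the $\hcomp_i$ act as the identity on the auxiliary $S^1\times S^2$ summands), not something one can assume in advance.
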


The rest of this section is dedicated to proving this proposition. We begin by computing explicitly certain components of Hendricks and Lipshitz's homotopies.

\subsection{Computing $\Omega$}
\label{section-omega} 
Let $\ZZ$ be the standard pointed matched circle for the torus, and $[\mathrm{Id}_{\CA(\ZZ)}]$ be the identity bimodule (Section \ref{sec:identity}). In this section, we will explicitly compute a representative for the homotopy equivalence 
\[
    \Omega : [\mathrm{Id}_{\ZZ}] \to \CFDAhat(\AZbar(\ZZ)) \DT \ \CFDAhat(\AZ(-\ZZ)).
\]

Note that the codomain of $\Omega$ is calculated in Section \ref{sec:AZpieces}. By definition $\Omega$ is a collection of maps 
\[
    \Omega^1_{1+j}: [\mathrm{Id}_{\ZZ}] \otimes \CA^j \to \CA \otimes \left (\CFDAhat(\AZbar) \DT \CFDAhat(\AZ)\right )
\]
for $j \geq 0$. Since $\Omega$ is a chain map, $d\Omega=0$, where $d\Omega$ is defined using the following diagrams: 
\begin{center}
    \scalebox{1}{
    \includestandalone{Appendices/dOmega}
    }
\end{center}

Here, $\Delta$ is the splitting operator that allocates the input algebra elements, and $\ovl D$ is the differential in the standard bar complex, e.g.\ $\ovl D(a_1\otimes  a_2 \otimes a_3) = (a_1 \cdot a_2) \otimes a_3 + a_1 \otimes (a_2 \cdot a_3)$ (see \cite[\S 2.2.4] {LOTbimodules}). Furthermore,  being a homotopy equivalence between two $DA$ bimodules is equivalent to being a quasi-isomorphism by \cite[Corollary 2.4.4]{LOTbimodules}. Thus, we define $\Omega^1_1$ to be 
\[
\io0
    \mapsto  \iod1 \bt \io0 
\qquad \qquad
\io1
    \mapsto    \iod0 \bt \io1.
\]
In the remainder of this section, we define $\Omega^1_{1+j}$ for $j>0$ such that $d\Omega=0$. All large calculations were computer-assisted \cite{math-code}. Note that the following observations simplify our computations greatly:
\begin{itemize}
     \item  Since the torus algebra $\CA$ has $\mu_i = 0$ for all $i\neq 2$, we only need to check diagrams where the number of inputs to $\mu$ is 2.
    \item As discussed in \S \ref{sec:identity} the only nontrivial component of $\delta^{\mathrm{Id}}$ is $\delta^1_2$.
    \item By the above computations, the only nontrivial components of $\delta^{\scaleto{\AZbar\cup\AZ}{5pt}}$ are $\delta^1_1$ and $\delta^1_2$.
\end{itemize}

The only term contributing to $(d\Omega)^1_1$ is:
\begin{center}
    \scalebox{1}{
    \includestandalone{Appendices/dOmega_1}
    }
\end{center}
It is straightforward that $\delta_1^1$ vanishes on the image of $\Omega_1^1$, so we have $(d\Omega)^1_1=0$. 

Next, there are three types of terms contributing to $(d\Omega)^1_2$:
\begin{center}
    \scalebox{1}{
    \includestandalone{Appendices/dOmega_2}
    }
\end{center}
We define $\Omega^1_2$ such that the third term is equal to the sum of the first two terms and so $(d\Omega)^1_2=0$. 
Let the non-zero terms in the definition of $\Omega^1_2$ be as follows: 
\begin{itemize}
    \item If $a$ is indecomposable (i.e.\ $a=\rho_{1,2}$, $\rho_{2,3}$ or $\rho_{3,4}$), then 
        \[ \Omega_2^1(\iota_a, a) =\  _a\iota \otimes a^* \bt \ _a\iota. \]
    \item If $a$ decomposes as $a = a_1a_2$ (i.e.\ $a=\rho_{1,3}$ or $\rho_{2,4}$), then 
    \[ \Omega_2^1(\iota_a, a) 
        = \ _{a_1}\iota \otimes a_1^* \bt a_2 + 
        a_1 \otimes a_2^* \bt\ _{a_2}\iota
        . \]
    \item If $a$ decomposes as $a = a_1 a_2 a_3$ (i.e.\ $a = \ro{1,4}$), then 
    \[ \Omega_2^1(\iota_a, a) 
        = \  _{a_1}\iota \otimes a_1^* \bt \ a_2a_3
        + a_1 \otimes a_2^* \bt a_3
        + a_1a_2 \otimes a_3^* \bt\ _{a_3}\iota
        . \]
\end{itemize}
(Note that when $a$ is an indecomposable strand, $\iota_a = (_a \iota)^c$.) It is straightforward to check that the above terms will sum up to zero and so $(d\Omega)_2^1=0$.  

Next, we define $\Omega^1_3$ so that $(d\Omega)^1_3=0$. There are four types of terms contributing to $(d\Omega)^1_3$:

\begin{center}
    \scalebox{1}{
    \includestandalone{Appendices/dOmega_3}
    }
\end{center}

We compute that $\Omega^1_3$ is:
\begin{align*}
    \io0 \otimes \ro{1,3} \otimes \ro{1,2} 
    &\mapsto \ro{1,2} \otimes \ro{1,3}^* \bt \io1\\
    \io0 \otimes \ro{1,3} \otimes \ro{1,3} 
    &\mapsto \ro{1,2} \otimes \ro{1,3}^* \bt \ro{2,3}\\
    \io0 \otimes \ro{1,3} \otimes \ro{1,4}
    &\mapsto \ro{1,2} \otimes \ro{1,3}^* \bt \ro{2,4}\\
    \io0 \otimes \ro{1,4} \otimes \ro{2,3}
    &\mapsto \ro{1,3} \otimes \ro{2,4}^* \bt \io0\\
    \io0 \otimes \ro{1,4} \otimes \ro{2,4}
    &\mapsto \ro{1,3} \otimes \ro{2,4}^* \bt \ro{3,4}\\
    \io0 \otimes \ro{3,4} \otimes \ro{2,3}
    &\mapsto  \ro{2,4}^* \bt \io0 \\
    \io0 \otimes \ro{3,4} \otimes \ro{2,4}
    &\mapsto  \ro{2,4}^* \bt \ro{3,4} \\
    \io1 \otimes \ro{2,3} \otimes \ro{1,2}
    &\mapsto   \ro{1,3}^* \bt \io1\\
    \io1 \otimes \ro{2,3} \otimes \ro{1,3}
    &\mapsto   \ro{1,3}^* \bt \ro{2,3}\\
    \io1 \otimes \ro{2,3} \otimes \ro{1,4}
    &\mapsto   \ro{1,3}^* \bt \ro{2,4}\\
    \io1 \otimes \ro{2,4} \otimes \ro{2,3}
    &\mapsto \ro{2,3} \otimes \ro{2,4}^* \bt \io0\\
    \io1 \otimes \ro{2,4} \otimes \ro{2,4}
    &\mapsto \ro{2,3} \otimes \ro{2,4}^* \bt \ro{3,4}
\end{align*}

Finally, there are five types of terms in $(d\Omega)_4^1$:
\begin{center}
    \scalebox{.8}{
    \includestandalone{Appendices/dOmega_4}
    }
\end{center}
We compute that the first four terms sum to zero \cite{math-code}; thus it suffices to set $\Omega^1_4=0$. Similarly, it suffices to set $\Omega^1_k=0$ for all $k\ge 5$, as well. 

\subsection{Proof of Proposition \ref{prop:surgerycomp}}

We are now ready to prove Proposition \ref{prop:surgerycomp}.
Let $X(K)=Y\setminus \nu(K)$ (resp. $X(U)=Y\setminus \nu(U)$) be the bordered $3$-manifold obtained by removing a tubular neighborhood of $K$ (resp. $U$) from $Y$ and equipping the torus boundary with the parametrization corresponding to the framing. Fix bordered Heegaard diagrams $\HD_K$ and $\HD_U$ for $X(K)$ and $X(U)$ as in Figures \ref{fig:merge-H} and \ref{fig:split-H}, respectively. 
\begin{figure}
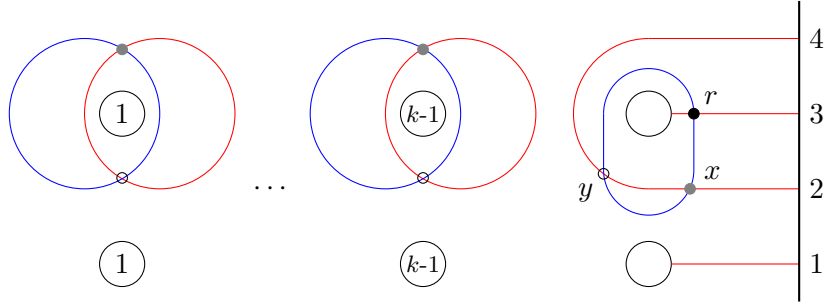

    \centering
    \includestandalone{images/merge-H}
    \caption{The genus $k$ bordered Heegaard diagram $\Hext_K$ for the complement of $K$ in $Y$. Compare this with the diagram $\HD_\infty$ in Figure \ref{fig:CFD-tori}; we label the analogous intersection point $r$ as well.}
    \label{fig:merge-H}
\end{figure}
\begin{figure}
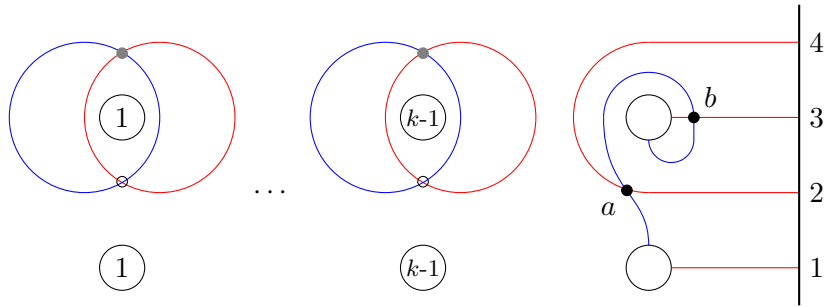

    \centering
    \includestandalone{images/split-H}
    \caption{The bordered Heegaard diagram $\HD_U$ for the complement of the unknot. }
    \label{fig:split-H}
\end{figure}

Thus, $\HD_K\cup \HD_{\infty}$ and $\HD_U\cup\HD_{\infty}$ are Heegaard diagrams for $Y$, while $\HD_{K}\cup\HD_{-1}$ and $\HD_U\cup\HD_{-1}$ are Heegaard diagrams for $Y_{-1}(K)$ and $Y_{-1}(U)$, respectively. Note that all of these diagrams are connected sums of genus $1$ Heegaard diagrams for $S^1\times S^2$ with at most one genus $1$ Heegaard diagram for $S^3$, where all the $S^1\times S^2$ diagrams have exactly two generators (in different Maslov gradings) and the base point is placed in the connected sum region. Thus, the involution map $\iota$ is equal to the identity map on each of  $\CFhat(\HD_K\cup\HD_{\infty})$, $\CFhat(\HD_K\cup\HD_{-1})$, $\CFhat(\HD_U\cup\HD_{\infty})$, and $\CFhat(\HD_U\cup\HD_{-1})$.  

Consequently, the surgery map from $\HFIhat(Y)$ to $\HFIhat(Y_{-1}(K))$ is induced by the following diagram:
\begin{center}
\begin{tikzcd}
\CFAhat(\HD_K) \DT \CFDhat(\HD_{\infty})
 \arrow{r}{\mathrm{Id} \DT  \phi} 
   \arrow{dr}{QG'}
  & \CFAhat(\HD_K) \DT \CFDhat(\HD_{-1})
       \\
Q \cdot \left(\CFAhat(\HD_K) \DT \CFDhat(\HD_{\infty})\right)
 \arrow{r}{\mathrm{Id} \DT  \phi} 
   &Q \cdot \left(\CFAhat(\HD_K) \DT \CFDhat(\HD_{-1})\right)
       \\
\end{tikzcd}
\end{center} 
The undrawn vertical maps $Q(\mathrm{id}+\iota)$  in the above square are zero maps, since $\iota=\mathrm{id}$. Since the map
$G'$ is a homotopy that makes the square commute, $G'$ is a chain map: \[d\circ G'+G'\circ d = \mathrm{Id} \DT  \phi \circ Q(\mathrm{id}+\iota) + Q(\mathrm{id}+\iota) \circ \mathrm{Id} \DT  \phi = 0. \]  Moreover, it follows from \cite[Proposition 6.1]{OzsvathSzaboss} that under the identifications of $\HFhat(Y)\cong \Lambda^*H_1(Y)$ and $\HFhat(Y_{-1}(K))\cong \Lambda^*H_1(Y_{-1}(K))$, the map $(\mathrm{Id} \DT \phi)_*$ is equal to the map induced by $q: H_1(Y) \to H_1(Y)/[K]$. So we need to check that $G'_*=0$. 

Similarly, for the unknot $U$ the induced map by  $\mathrm{Id} \DT \phi$ from $\HFhat(Y)\cong \Lambda^*H_1(Y)$ to $\HFhat(Y_{-1}(U))\cong \Lambda^*H_1(Y_{-1}(U))$ is equal to $i(\cdot)\wedge [K']$. Therefore, we need to check that $G'_*=i$.

We now verify the map $G'$ in the two cases $\HD = \HD_K$ and $\HD = \HD_U$. 
Recall that  $G'$ 
is defined by the composition 
\[G'=(\mathrm{Id} \DT  G)\circ (\h \DT \mathrm{Id}^2)\circ(\mathrm{Id} \DT \Omega \DT \mathrm{Id})\circ\eta\]
(Figure \ref{fig:involutive-surgery-diagram}).
We computed $\Omega$ in \S \ref{section-omega},  and $G$ is computed in \cite[Lemma 7.2]{HendricksLipshitz}. So to compute $G'$, we need to explicitly compute the homotopy equivalence
\[ \h:\CFAhat(\doublebar{\HD}) \DT \CFDAhat(\ovl{\AZ})\to\CFAhat(\HD)\]
for $\HD=\HD_K$ and $\HD_{U}$. 
By \cite[Lemma 4.2]{HendricksLipshitz}, it suffices to find a grading-preserving, non-nullhomotopic homomorphism. Below, we write the components of this $A_\infty$-module homomorphism as $\h = (\psi_1, \psi_2, \ldots)$. 

\begin{figure}
    \centering
    \includestandalone{images/merge-H-AZbar}
    \caption{The diagrams for $\doublebar{\Hext_K}$ and $\AZbar$, with the extra copies of $S^1 \times S^2$ omitted.}
    \label{fig:merge-H-AZbar}
\end{figure}

First, suppose $\HD=\HD_K$, and see Figure \ref{fig:merge-H-AZbar} for the diagrams of $\ovl{\ovl{\HD_K}}$ and $\ovl{\AZ}$.   
In this case, the non-zero terms of $\hcomp_1$ are given by 
\begin{align*}
    \hcomp_1(\doublebar r \bt \iod1) 
    = \hcomp_1(\doublebar y \bt \rod{2,3}) 
    &=  y \\
    \hcomp_1(\doublebar r \bt \rod{2,4})
    &=  x \\
    \hcomp_1(\doublebar r \bt \rod{3,4})
    &=  r
\end{align*}
and the nonzero terms of $\hcomp_2$ are given by 

\begin{align*}
    \hcomp_2(\doublebar y \bt \rod{1,3} \otimes \ro{1,2})
    = \hcomp_2(\doublebar r \bt \rod{1,2} \otimes \ro{1,2})
    &=  x \\
    \hcomp_2(\doublebar y \bt \rod{1,3} \otimes \ro{1,3})
    = \hcomp_2(\doublebar r \bt \rod{1,2} \otimes \ro{1,3})
    &=  r \\
    \hcomp_2(\doublebar y \bt \rod{1,3} \otimes \ro{1,4})
    = \hcomp_2(\doublebar r \bt \rod{1,2} \otimes \ro{1,4})
    &=  y \\
\end{align*}
For all $i\ge 3$, we have $\hcomp_i=0$. It is easy to check that $\h$ is a homogeneous homotopy equivalence. 

\begin{figure}
    \centering
    \includestandalone{images/split-H-AZbar}
    \caption{The diagrams for $\doublebar{\HD_U}$ and $\AZbar$, with the extra copies of $S^1 \times S^2$ omitted.}
    \label{fig:split-H-AZbar}
\end{figure}

Second, suppose $\HD=\HD_U$ and see Figure \ref{fig:split-H-AZbar} for the diagrams of $\ovl{\ovl{\HD_U}}$ and $\ovl{\AZ}$. In this case, the non-zero terms of $\h$ are given by 
\begin{align*}
    \hcomp_1(\doublebar b \bt \rod{3,4}) 
    &=  b \\
    \hcomp_1(\doublebar b \bt \iod1)
    &=  a
\end{align*}
and
\begin{align*}
    \hcomp_2(\doublebar a \bt \rod{1,3} \otimes \ro{1,3})
    = \hcomp_2(\doublebar a \bt \rod{2,3} \otimes \ro{2,3})
    &= b \\
    \hcomp_2(\doublebar a \bt \rod{1,3} \otimes \ro{1,4})
    = \hcomp_2(\doublebar a \bt \rod{2,3} \otimes \ro{2,4})
    &= a \\
    \hcomp_2(\doublebar a \bt \iod0 \otimes \ro{1,2})
    &= a.
\end{align*}

Finally, we are ready to compute $G'$, starting with the knot $K$.
\[\begin{split}
&\left(\mathrm{Id} \DT  G\right)\circ\left(\h \DT \mathrm{Id}^2\right)\circ\left(\mathrm{Id} \DT \Omega \DT \mathrm{Id}\right)\circ\eta(x|r)=\\
&\left(\mathrm{Id} \DT  G\right)\circ\left(\h \DT \mathrm{Id}^2\right)\circ\left(\mathrm{Id} \DT \Omega \DT \mathrm{Id}\right)(\ovl{\ovl{x}}|\io1|\ovl{\ovl{r}})=\\
&\left(\mathrm{Id} \DT  G\right)\circ\left(\h \DT \mathrm{Id}^2\right)\left(\ovl{\ovl{x}}|\rho^*_{2,3}|\rho_{3,4}|\ovl{\ovl{r}}+\ovl{\ovl{r}}|\rho_{3,4}^*|\iota_1|\ovl{\ovl{r}}+\ovl{\ovl{r}}|\rho_{2,4}^*|\rho_{3,4}|\ovl{\ovl{r}}\right)=\\
&\left(\mathrm{Id} \DT  G\right)(r|\iota_1|\ovl{\ovl{r}}+x|\rho_{3,4}|\ovl{\ovl{r}})=0.
\end{split}\]
 The diagrams contributing to the nonzero terms for $\left(\mathrm{Id} \DT \Omega \DT \mathrm{Id}\right)(\ovl{\ovl{x}}|\io1|\ovl{\ovl{r}})$ are depicted in Figure \ref{fig:id-Omega-id-eg1}.
 
\begin{figure}
    \centering
    \includestandalone{images/id-Omega-id-eg1}
    \caption{The nonzero terms in the computation of $\left(\mathrm{Id} \DT \Omega \DT \mathrm{Id}\right)(\ovl{\ovl{x}}|\ovl{\ovl{r}})$.}
    \label{fig:id-Omega-id-eg1}
\end{figure} 
 
Moreover,
 \[\begin{split}
&\left(\mathrm{Id} \DT  G\right)\circ\left(\h \DT \mathrm{Id}^2\right)\circ\left(\mathrm{Id} \DT \Omega \DT \mathrm{Id}\right)\circ\eta(y|r)=\\
&\left(\mathrm{Id} \DT  G\right)\circ\left(\h \DT \mathrm{Id}^2\right)\circ\left(\mathrm{Id} \DT \Omega \DT \mathrm{Id}\right)(\ovl{\ovl{y}}|\io1|\ovl{\ovl{r}})=\\
&\left(\mathrm{Id} \DT  G\right)\circ\left(\h \DT \mathrm{Id}^2\right)\left(\ovl{\ovl{y}}|\rho_{2,3}^*|\rho_{3,4}|\ovl{\ovl{r}}\right)=\\
&\left(\mathrm{Id} \DT  G\right)(y|\rho_{3,4}|\ovl{\ovl{r}})=0.
\end{split}\]

Next, we compute $G'$ for the unknot $U$. 
\[\begin{split}
&\left(\mathrm{Id} \DT  G\right)\circ\left(\h \DT \mathrm{Id}^2\right)\circ\left(\mathrm{Id} \DT \Omega \DT \mathrm{Id}\right)\circ\eta(a|r)=\\
&\left(\mathrm{Id} \DT  G\right)\circ\left(\h \DT \mathrm{Id}^2\right)\circ\left(\mathrm{Id} \DT \Omega \DT \mathrm{Id}\right)(\ovl{\ovl{a}}|\io1|\ovl{\ovl{r}})=\\
&\left(\mathrm{Id} \DT  G\right)\circ\left(\h \DT \mathrm{Id}^2\right)\left(\ovl{\ovl{a}}|\rho_{2,3}^*|\rho_{3,4}|\ovl{\ovl{r}}+\ovl{\ovl{a}}|\rho_{3,4}^*|\iota_1|\ovl{\ovl{r}}+\ovl{\ovl{a}}|\iota_0^*|\iota_1|\ovl{\ovl{r}}\right)=\\
&\left(\mathrm{Id} \DT  G\right)(b|\rho_{2,4}|\ovl{\ovl{r}})=b|a.
\end{split}\]

Thus, $G'$ maps the top generator of $\HFhat(\HD_{U}\cup\HD_{\infty})$ to the top generator of $\HFhat(\HD_{U}\cup\HD_{-1})$. Therefore, $G'_*=i$.

\section{Involutive spectral sequence}

In this section, we use the bordered description of the branched double cover spectral sequence by Lipshitz, Ozsv\'ath, and Thurston \cite{LOTborderedss, LOTborderedss2} (see \S \ref{sec:borderedss}) to construct a spectral sequence converging to the involutive Heegaard Floer homology.  

\subsection{Construction of filtered complex}
\label{sec:involutivess}
Let $L\subset S^3$ be a link. As in \S \ref{sec:borderedss}, consider a diagram $D=B_0B_1\cdots B_{n+1}$ for $L$ such that $D$ is the plat closure of a braid on $2k$ strands with $n$ crossings. So, each $B_i$ is an elementary braid for $i=1,\cdots, n$, while $B_0$ and $B_{n+1}$ are $k$ cups and caps in a row, respectively. Each $\Sigma(B_i)$ is identified with a bordered $3$-manifold, and by Equation \ref{eq:pairing}, $\CFhat(\Sigma(L))$ is identified with a $\{0,1\}^n$-filtered chain complex. 

\begin{theorem}\label{thm:flr-iota}
Under the identification \ref{eq:pairing}, the involution $\iota$ on $\CFhat(\Sigma(L))$ is chain homotopic to a filtered map with respect to the $\{0,1\}^n$-grading.
\end{theorem}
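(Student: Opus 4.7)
The plan is to apply Hendricks and Lipshitz's involutive surgery exact triangle \cite[Theorem 7.1]{HendricksLipshitz} locally at each crossing $B_i$, and then to paste these local models together using the pairing theorem \ref{eq:pairing}. This will express $\iota$, up to chain homotopy, as a tensor product of filtered representatives, one per crossing.

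First, I would work at a single crossing. For each $i = 1, \ldots, n$, the bimodule $\CFDAhat(\Sigma(B_i))$ is homotopy equivalent to the mapping cone $\cone(f_i\colon \CFDAhat(\Sigma(B_i^1)) \to \CFDAhat(\Sigma(B_i^0)))$, which carries a two-step filtration by the cube coordinate $v_i$. The Hendricks--Lipshitz bordered description of $\iota$ on a surgery mapping cone, built from the Auroux--Zarev pieces and the homotopies $\Omega$ and $\h$ worked out in Section \ref{sec:explicit-calcs}, gives (up to chain homotopy) a representative of $\iota$ whose only nonzero matrix components with respect to the cube decomposition preserve the $v_i$-filtration. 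The difference between $\iota$ and this representative is the coboundary of an explicit chain homotopy involving the map $G'$.

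Second, I would assemble these local representatives via the pairing \ref{eq:pairing}. By naturality of the bordered construction of $\iota$ with respect to decomposing the Heegaard diagram into the bordered pieces $\Sigma(B_i)$, the involution on $\CFhat(\Sigma(L))$ can be computed, up to chain homotopy, as a composition of the local bordered $\iota$-maps tensored with identities on the remaining factors. Substituting each local filtered representative then produces a chain map on the whole complex that is filtered with respect to the full $\{0,1\}^n$-grading, and summing the tensor products of the local chain homotopies (each paired with identities elsewhere) yields a global chain homotopy from $\iota$ to this filtered representative.

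The main obstacle is the compatibility step: the Heegaard moves and Auroux--Zarev insertions defining the bordered $\iota$ must be localizable to one $B_i$ at a time in a way that respects the tensor decomposition, and the local chain homotopies coming from different crossings must combine to form a genuine chain homotopy after tensoring (rather than a higher $A_\infty$ homotopy). Handling this requires carefully tracking the $A_\infty$ actions on each bordered piece, and in the delicate spots invoking the uniqueness (up to homotopy) of the Auroux--Zarev equivalence $\Omega$ from \cite[Corollary 4.6]{HendricksLipshitz} to adjust intermediate composites to strictly filtered ones.
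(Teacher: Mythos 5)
Your plan captures the right ingredients and is essentially the same strategy as the paper's, but the way you organize it (``apply HL's surgery triangle locally at each crossing, then paste local $\iota$-maps together'') is not quite how the argument runs, and if executed literally it would run into the compatibility problem you yourself flag.

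The issue is that there is no ``local $\iota$''---the involution is only defined on a closed manifold. What the paper actually does (Lemma~\ref{lem:borinvolution}, modeled on Hendricks--Lipshitz's Theorem~5.1) is establish a single \emph{global} bordered representative of $\iota$, inserting identity bimodules and then $\AZbar\cup\AZ$ simultaneously at \emph{every} interface between the bordered pieces, so that $\iota\simeq \Psi\circ\Omega\circ\eta$ with $\Psi=\Psi_0\DT\Psi_1\DT\widetilde{\Psi}\DT\cdots\DT\Psi_{n+1}$ a box tensor product of Heegaard-move maps. Once this global model is in hand, filteredness is a pointwise check: $\eta$ and $\Omega$ commute with the identifications $\CFDhat(\HD_0)\simeq\cone(\phi)$, $\CFDhat(\ovl{\ovl{\HD_0}})\simeq\cone(\ovl{\ovl{\phi}})$; and each crossing factor $\widetilde{\Psi}$ is, by \cite[Lemma~7.2]{HendricksLipshitz}, homotopy equivalent to a lower-triangular matrix $\begin{bmatrix}\widetilde{\Psi}&0\\G&\widetilde{\Psi}\end{bmatrix}$ with respect to the $\cone(\phi)$ decomposition, hence filtered. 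Since the factors in the box tensor product preserve filtration one coordinate at a time, so does $\Psi$, and hence $\iota$.

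So: the key input you identify (the lower-triangular form of the surgery homotopy) is exactly the right one, and you are right that the $\{0,1\}^n$-filtration is induced crossing-by-crossing from the mapping cone decomposition. But the resolution of your ``compatibility step'' is not to invoke uniqueness of $\Omega$ to adjust composites of local representatives; it is to establish the global bordered model of $\iota$ \emph{first} (this requires checking that the pairing maps commute up to homotopy with $\eta$, $\Omega_1$, $\Omega_2$, and the $\Psi$'s, which the paper does by adapting Hendricks--Lipshitz's diagram verification), and only then observe that each factor in the resulting composition is filtered. This avoids ever having to combine local chain homotopies across crossings.
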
  

Using bordered Heegaard Floer homology, Hendricks and Lipshitz \cite{HendricksLipshitz} give a bordered extension of involutive Heegaard Floer homology.  Inspired by their work, we give a bordered description of the involution $\iota$ up to homotopy equivalence before proving Theorem \ref{thm:flr-iota}.

Recall that $\Sigma(B_0)$ and $\Sigma(B_{n+1})$ are bordered handlebodies of genus $k-1$, so that their boundaries are identified with $F(\ZZ)$ and $-F(\ZZ)$, respectively. Let $\HD_{B_0}$ and $\HD_{B_{n+1}}$ be bordered Heegaard diagrams for $\Sigma(B_0)$ and $\Sigma(B_{n+1})$ with $\bdy \HD_{B_0}=\ZZ$ and $\bdy \HD_{B_{n+1}}=-\ZZ$, respectively.

For $1\le i\le n$, $\Sigma(B_i)$ is the mapping cylinder of a Dehn twist along a simple closed curve $\gamma_i\subset F(\ZZ)$. Let $Y_{\Id}=[0,1]\times F(\ZZ)$. 
After removing a neighborhood $\nu(\{\frac{1}{2}\}\times\gamma_i)$ of the curve $\{\frac{1}{2}\}\times\gamma_i$ in $Y_\Id$, we parametrize the torus boundary  of $Y_{\Id}\setminus \nu(\{\frac{1}{2}\}\times\gamma_i)$  so that $\Sigma(B_i)$, $\Sigma(B_i^0)$ and $\Sigma(B_i^1)$ are obtained by filling this boundary with slopes $0$, $-1$ and $\infty$, respectively. Then, construct a triply bordered manifold from $Y_{\Id}\setminus \nu(\{\frac{1}{2}\}\times\gamma_i)$ by connecting the torus boundary to one of the boundary components identified with $F(\ZZ)$ with a framed arc. 
Let $\HD^i$ be a triply bordered Heegaard diagram for this triply bordered $3$-manifold.  By construction, $\HD_{B_i}=\HD^i\cup \HD_0$ is a bordered Heegaard diagram for $\Sigma(B_i)$. Recall $\HD_0$ denotes the standard bordered Heegaard diagram for the $0$-framed solid torus. 

In order to compute the involutive chain map $\iota$ on $\Sigma(L)$, we use the pairing theorem of bordered Floer homology to identify  $\CFhat(\Sigma(L))$ with a boxed tensor product of bordered Heegaard Floer modules, and then consider the composition of maps in Equation \ref{eq:binv}.

\begin{align} \label{eq:binv}
\CFhat(\Sigma(L))
\simeq\ &\CFAhat(\HD_{B_0}) \DT \CFDAAhat(\HD^1) \DT \CFDhat(\HD_0) \DT \cdots \DT \CFDhat(\HD_{B_{n+1}})\notag\\
\xrightarrow{\eta} \ & \CFAhat(\ovl{\ovl{\HD_{B_0}}}) \DT \CFDAAhat(\ovl{\ovl{\HD^1}}) \DT \CFDhat(\ovl{\ovl{\HD_0}}) \DT \cdots \DT \CFDhat(\ovl{\ovl{\HD_{B_{n+1}}}})\notag\\
=\ \ &\CFAhat(\ovl{\ovl{\HD_{B_0}}}) \DT [\mathrm{Id}] \DT \CFDAAhat(\ovl{\ovl{\HD^1}}) \DT [\mathrm{Id}] \DT \cdots \DT \CFDhat(\ovl{\ovl{\HD_0}})\notag\\
&\quad \quad  \DT [\mathrm{Id}] \DT \CFDhat(\ovl{\ovl{\HD_{B_{n+1}}}})\notag\\
\xrightarrow{\Omega_1} \ 
& \CFAhat(\ovl{\ovl{\HD_{B_0}}}) \DT \CFDAhat(\Id) \DT \CFDAAhat(\ovl{\ovl{\HD^1}}) \DT \CFDAhat(\Id) \DT \CFDhat(\ovl{\ovl{\HD_0}})\\
&\quad \quad  \quad  \DT \CFDAhat(\Id) \DT \cdots \DT \CFDhat(\ovl{\ovl{\HD_{B_{n+1}}}})\notag\\
\xrightarrow{\Omega_2} \ 
&\CFAhat(\ovl{\ovl{\HD_{B_0}}}) \DT \CFDAhat(\ovl{\AZ}) \DT \CFDAhat(\AZ) \DT \CFDAAhat(\ovl{\ovl{\HD^1}}) \DT \CFDAhat(\ovl{\AZ})\notag\\
&\quad \quad  \quad  \DT \CFDAhat(\AZ) \DT \CFDhat(\ovl{\ovl{\HD_0}}) \DT \CFDAhat(\ovl{\AZ}) \DT \cdots \DT \CFDhat(\ovl{\ovl{\HD_{B_{n+1}}}})\notag\\
\xrightarrow{\Psi_{0} \DT \Psi_1 \DT \widetilde{\Psi} \DT \cdots \DT \Psi_{n+1}}  \ &  \CFAhat(\HD_{B_0}) \DT \CFDAAhat(\HD^1) \DT \CFDhat(\HD_0) \DT \cdots \DT \CFDhat(\HD_{B_{n+1}})\notag\\
\simeq\ &\CFhat(\Sigma(L)).\notag
\end{align}

Here, $[\mathrm{Id}]=[\mathrm{Id}_{\mathcal{A}(\ZZ)}]$ is the type $DA$ identity bimodule of $\mathcal{A}(\ZZ)$ and $\Id=\Id_{\ZZ}$ is the standard bordered Heegaard diagram representing the identity cobordism from $F(\ZZ)$ to $F(\ZZ)$ (Section \ref{sec:identity}). The map $\Omega_1$ is defined by a homotopy equivalence between each copy of $[\mathrm{Id}]$ and $\CFDAhat(\Id)$. The map $\Omega_2$ is defined by the homotopy equivalence induced by a sequence of Heegaard moves from each copy of $\Id$ to $\ovl{\AZ}\cup\AZ$, as discussed in Section \ref{section-omega}. The chain homotopies $\widetilde{\Psi},\Psi_0$, and $\Psi_{n+1}$ are induced by a sequence of Heegaard moves from $\AZ\cup\ovl{\ovl{\HD_0}}$, 
 $\ovl{\ovl{\HD_{B_0}}}\cup\ovl{\AZ}$, and $\AZ\cup\ovl{\ovl{\HD_{B_{n+1}}}}$ to $\HD_0$, $\HD_{B_0}$, and $\HD_{B_{n+1}}$, respectively. Similarly, $\Psi_i$ is induced by a sequence of Heegaard moves from $\AZ\cup\ovl{\ovl{\HD^{i}}}\cup\ovl{\ovl{\AZ}}\cup\ovl{\ovl{\AZ}}$ to $\HD^i$ for any $1\le i\le n$. 

\begin{lemma}\label{lem:borinvolution} With the above notation, under the identification of $\CFhat(\Sigma(L))$ with \[\CFAhat(\HD_{B_0}) \DT \CFDAAhat(\HD^1) \DT \CFDhat(\HD_0) \DT \cdots \DT \CFDhat(\HD_{B_{n+1}})\] by the pairing theorems, the map $\Psi\circ\Omega\circ\eta$ defined in Equation \ref{eq:binv} is homotopic to the involutive chain map $\iota$ on $\CFhat(\Sigma(L))$ defined in \cite{HendricksManolescu}. Here, $\Omega=\Omega_2\circ\Omega_1$, and $\Psi=\Psi_0 \DT \Psi_1 \DT \widetilde{\Psi} \DT \cdots \DT \Psi_{n+1}$.
\end{lemma}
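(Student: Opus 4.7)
The plan is to recognize $\Psi \circ \Omega \circ \eta$ as a concrete realization of the Hendricks--Manolescu involution $\iota = \Phi_\HD \circ \eta_\HD$ on the total Heegaard diagram $\HD$ for $\Sigma(L)$, and then invoke the well-definedness of $\Phi_\HD$ up to chain homotopy. First, I would match the initial $\eta$ in Equation~\ref{eq:binv} with the canonical map $\eta_\HD : \CFhat(\HD) \to \CFhat(\ovl{\ovl{\HD}})$. Since $\eta$ acts piece-by-piece by reversing the Heegaard surface orientation and swapping the roles of $\alpha$- and $\beta$-curves (and arcs), the pairing theorem and the naturality of this involution under gluing imply that $\eta$ on the paired complex coincides with $\eta_\HD$ applied to $\CFhat(\HD)$, where $\HD = \HD_{B_0} \cup \HD^1 \cup \HD_0 \cup \cdots \cup \HD_{B_{n+1}}$ is the total Heegaard diagram representing $\Sigma(L)$.

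Second, I would argue that $\Psi \circ \Omega$ is a chain homotopy equivalence from $\CFhat(\ovl{\ovl{\HD}})$ to $\CFhat(\HD)$ induced by a composite sequence of Heegaard moves. The map $\Omega_1$ replaces each copy of $[\mathrm{Id}_{\CA(\ZZ)}]$ with $\CFDAhat(\Id_\ZZ)$; this is a bimodule chain homotopy equivalence by \cite[Theorem~4]{LOTbimodules}. The map $\Omega_2$ then implements the chain homotopy equivalence $\CFDAhat(\Id_\ZZ) \simeq \CFDAhat(\ovl{\AZ}) \DT \CFDAhat(\AZ)$ arising from the fact that $\ovl{\AZ} \cup \AZ$ is a bordered Heegaard diagram for the identity mapping cylinder (up to Heegaard moves). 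Finally, each $\Psi_i$ (together with $\widetilde\Psi$, $\Psi_0$, and $\Psi_{n+1}$) is by definition the chain homotopy equivalence induced on the bordered bimodule level by an explicit sequence of Heegaard moves taking the paired diagram involving AZ pieces and double-barred factors back to the original piece. Pairing these equivalences with $\DT$ (using the functoriality of the pairing theorem with respect to bimodule homotopy equivalences) yields a chain homotopy equivalence of the total complexes, which is itself induced by the concatenated sequence of Heegaard moves transforming $\ovl{\ovl\HD}$ into $\HD$.

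Third, I would conclude by appealing to the invariance result: the chain homotopy equivalence $\Phi_\HD : \CFhat(\ovl{\ovl\HD}) \to \CFhat(\HD)$ is well-defined up to chain homotopy (see \cite{OS,OSfour,JuhaszThurston,HendricksManolescu}), so any two chain homotopy equivalences induced by sequences of Heegaard moves between these diagrams are homotopic. Therefore $\Psi \circ \Omega \simeq \Phi_\HD$, and composing with $\eta \simeq \eta_\HD$ gives $\Psi \circ \Omega \circ \eta \simeq \Phi_\HD \circ \eta_\HD = \iota$.

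The main obstacle will be bookkeeping the functoriality of the pairing theorem: one must verify that the homotopy equivalences on each bordered piece, when glued via $\DT$, genuinely realize a chain homotopy equivalence of the total Heegaard Floer complexes induced by moves on the combined diagram, rather than merely an abstract homotopy equivalence of modules. Concretely, this requires that the composed Heegaard moves (identity-bimodule replacement, insertion of $\AZbar \cup \AZ$, and the local moves realizing $\Psi_i$) assemble into a valid global sequence of Heegaard moves on $\ovl{\ovl{\HD}}$ terminating at $\HD$. Once this compatibility is in place, the invariance statement does the rest of the work.
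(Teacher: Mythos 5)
Your proposal follows essentially the same strategy as the paper: identify $\eta$ with $\eta_\HD$ via the pairing theorem, recognize $\Psi\circ\Omega$ as a chain homotopy equivalence induced by Heegaard moves on the paired diagram, and conclude $\Psi\circ\Omega\simeq\Phi_\HD$ by the naturality of Heegaard Floer homology. The ``main obstacle'' you correctly flag is exactly what the paper's proof works out, by verifying four commutative-up-to-homotopy diagrams modeled on Hendricks--Lipshitz's Diagrams 5.2--5.5 and their Lemma 5.6.
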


\begin{proof}
The proof is similar to the proof of \cite[Theorem 5.1]{HendricksLipshitz}.  
For simplicity, the following discussion will focus on the case of one elementary braid in the link diagram ($n=1$); the general case of $n$ elementary braid pieces will follow immediately.
We need to verify that up to homotopy, the following diagrams commute.

\begin{equation}\label{eq:eta-trimod-com}
    {\xymatrix{
    \CFAhat(\HD_{B_0}) \DT \CFDAAhat(\HD^1)  \DT  \CFDhat(\HD_0)  \DT \CFDhat(\HD_{B_2}) \ar[d]_\eta \ar[r] & \CFhat(.)\ar[d]_\eta\\
    \CFAhat(\ovl{\ovl{\HD_{B_0}}}) \DT \CFDAAhat(\ovl{\ovl{\HD^1}}) \DT  \CFDhat(\ovl{\ovl{\HD_0}}) \DT \CFDhat(\ovl{\ovl{\HD_{B_2}}})
    \ar[r] & \CFhat(.),
    }}
\end{equation}
where the horizontal arrows come from the pairing theorems for bordered Floer homology (for type $DA$ bimodules \cite[Theorems 11 and 12]{LOTbimodules}). In the right column of Equation~\ref{eq:eta-trimod-com}, the omitted terms are the obvious Heegaard diagrams from the pairing theorem, i.e.\ unions of the bordered Heegaard diagrams from the left column, for instance $\HD_{B_0}\cup\HD^1\cup\HD_0\cup\HD_{B_2}$ in the top-right term. Below, we will continue to use this notation.

\begin{equation}\label{eq:omega1-com}
    {\xymatrix{
    [\ovl{\ovl{\HD_{B_0}}}]  \DT  [\ovl{\ovl{\HD^1}}] \DT  [\ovl{\ovl{\HD_0}}]   \DT [\ovl{\ovl{\HD_{B_2}}}] \ar[d]_{\Omega_1} \ar[r] & \CFhat(. )\ar[d]\\
    [\ovl{\ovl{\HD_{B_0}}}]  \DT  [\Id] \DT  [\ovl{\ovl{\HD^1}}] \DT  [\Id] \DT  [\ovl{\ovl{\HD_0}}]  \DT  [\Id]  \DT [\ovl{\ovl{\HD_{B_2}}}]
    \ar[r] & \CFhat(.),
    }}
\end{equation}

\begin{equation}\label{eq:omega2-com}
    {\xymatrix{
    [\ovl{\ovl{\HD_{B_0}}}]  \DT  [\Id] \DT  [\ovl{\ovl{\HD^1}}] \DT  [\Id] \DT  [\ovl{\ovl{\HD_0}}]  \DT  [\Id]  \DT [\ovl{\ovl{\HD_{B_2}}}] \ar[d]_{\Omega_2} \ar[r] & \CFhat(. )\ar[d]\\
    {\begin{split}&[\ovl{\ovl{\HD_{B_0}}}]  \DT  [\AZbar]  \DT  [\AZ] \DT  [\ovl{\ovl{\HD^1}}] \DT  [\ovl{\AZ}]\\ & \DT  [\AZ] \DT  [\ovl{\ovl{\HD_0}}]  \DT  [\ovl{\AZ}]  \DT  [\AZ]  \DT [\ovl{\ovl{\HD_{B_2}}}]\end{split}}
    \ar[r] & \CFhat(.)
    }}
\end{equation} 

\begin{equation}\label{eq:psi-com}
    {\xymatrix{
    {\begin{split}&[\ovl{\ovl{\HD_{B_0}}}]  \DT  [\ovl{\AZ}]  \DT  [\AZ] \DT  [\ovl{\ovl{\HD^1}}] \DT  [\ovl{\AZ}]\\
    & \DT  [\AZ] \DT  [\ovl{\ovl{\HD_0}}]  \DT  [\ovl{\AZ}]  \DT  [\AZ]  \DT [\ovl{\ovl{\HD_{B_2}}}]\end{split}} \ar[d]_{ \Psi=\Psi_0 \DT \Psi_1 \DT \widetilde{\Psi} \DT \Psi_{2}} \ar[r] & \CFhat(. )\ar[d]\\
    \CFAhat(\HD_{B_0}) \DT \CFDAAhat(\HD^1)  \DT  \CFDhat(\HD_0)  \DT \CFDhat(\HD_{B_2})
    \ar[r] & \CFhat(.),
    }}
\end{equation} 
where the square brackets are shorthand for the appropriate bordered Floer module, i.e.\ $[\ovl{\ovl{\HD_{B_0}}}]$ denotes $\CFAhat(\ovl{\ovl{\HD_{B_0}}})$, while $[\Id]$ denotes $\CFDAhat(\Id)$, $[\ovl{\ovl{\HD^1}}]$ denotes $\CFDAAhat(\ovl{\ovl{\HD^1}})$, $[\ovl{\ovl{\HD_0}}]$ denotes $\CFDhat(\ovl{\ovl{\HD_0}})$, $[\ovl{\ovl{\HD_{B_2}}}]$ denotes $\CFDhat(\ovl{\ovl{\HD_{B_2}}})$, $[\AZ]$ denotes $\CFDAhat(\AZ)$, and $[\ovl{\AZ}]$ denotes $\CFDAhat(\ovl{\AZ})$. The horizontal arrows are chain homotopy equivalences from the pairing theorem, while the right vertical arrows are chain homotopy equivalences induced by a sequence of Heegaard moves. 
 
The proof of commutativity for Diagram \ref{eq:eta-trimod-com} is the similar to the proof for \cite[Diagram 5.2]{HendricksLipshitz}. Note that as proved by \cite{HendricksLipshitz}, the almost complex structures can be chosen such that this diagram commutes on the nose. 
  
The proof of commutativity for diagrams \ref{eq:omega2-com} and \ref{eq:psi-com} is similar to the proof for \cite[Diagram 5.4]{HendricksLipshitz} and \cite[Diagram 5.5]{HendricksLipshitz}, respectively, and it follows from \cite[Lemma 5.6]{HendricksLipshitz} and its bimodule version. 
  
Finally, the proof of commutativity for Diagram \ref{eq:omega1-com} is similar to the proof of commutativity for \cite[Diagram 5.3]{HendricksLipshitz}.
\end{proof}
  
\begin{proof}[Proof of Theorem \ref{thm:flr-iota}]
We recall from Section~\ref{sec:borderedss} some useful facts. After decomposing the link diagram of $L$ into elementary braids $B_i$, we view the Heegaard Floer chain complex of the branched double cover of the link $L$ as a boxed tensor product of bordered modules and bimodules via Equation \ref{eq:pairing}. The $\{0,1\}^n$-filtration on the right-hand side of Equation \ref{eq:pairing} is obtained by identifying each type $DA$ bimodule $\CFDAhat(\Sigma(B_i))$ with the mapping cone of a bimodule map 
\[f_i:\CFDAhat(\Sigma(B_i^1))\to\CFDAhat(\Sigma(B_i^0)),\]
where the superscript denotes the type of resolution of the crossing of $B_i$. 
This identification is obtained by chain homotopy equivalences  
\[\begin{split}\CFDAhat(\HD_{B_i})&\simeq\CFDAAhat(\HD^i) \DT \CFDhat(\HD_0)\simeq \CFDAAhat(\HD^i) \DT \cone(\phi)\\
&\simeq \cone\left(\mathrm{Id} \DT \phi:\CFDAhat(\HD^i\cup\HD_{\infty})\to \CFDAhat(\HD^i\cup\HD_{-1})\right)
\\
&\simeq \cone\left(f_i:\CFDAhat(\Sigma(B_i^1))\to\CFDAhat(\Sigma(B_i^0)) \right)
\end{split}\]
where the first homotopy equivalence follows from the bordered Floer pairing theorem, and the second follows from $\CFDhat(\HD_0)$ being homotopy equivalent to the mapping cone of a map $\phi:\CFDhat(\HD_{\infty})\to\CFDhat(\HD_{-1})$.

Further, one can analogously define a map $\ovl{\ovl{\phi}}$ between the appropriate modules such that $\CFDhat(\ovl{\ovl{\HD_0}})\simeq\cone(\ovl{\ovl{\phi}})$ and $\eta\phi=\ovl{\ovl{\phi}}\eta$.

By Lemma \ref{lem:borinvolution},  $\iota$ is homotopy equivalent to the chain map $\Psi\circ\Omega\circ\eta$ from Equation \ref{eq:binv}. Identifying every $\CFDhat(\HD_0)$ and $\CFDhat(\ovl{\ovl{\HD_0}})$ with $\cone(\phi)$ and $\cone({\ovl{\ovl{\phi}}})$, respectively, it is easy to see that $\eta$ and $\Omega$ preserve filtration.  By \cite[Lemma 7.2]{HendricksLipshitz}, the $i$-th copy of the chain homotopy $\widetilde{\Psi}$ from $\CFDAhat(\AZ) \DT \CFDhat(\ovl{\ovl{\HD_0}})$ to $\CFDhat(\HD_0)$ is homotopy equivalent to 
\begin{equation}
\label{eq:psi-filtration-preserving}
\begin{split}
\cone \Big(     \mathrm{Id}  \DT  \ovl{\ovl{\phi}}:\CFDAhat(\AZ) \DT \CFDhat(\ovl{\ovl{\HD_{\infty}}}
)
\to \CFDAhat(\AZ) \DT \CFDhat(\ovl{\ovl{\HD_{-1}}})\Big)\\
\xrightarrow{
\begin{bmatrix}\widetilde{\Psi}&0\\G&\widetilde{\Psi}\end{bmatrix}
    } 
    \cone \left( \phi : \CFDhat(\HD_{\infty}) \to \CFDhat(\HD_{-1}) \right)
\end{split}
\end{equation}
using the identification 
\begin{align*}
& \CFDAhat(\AZ)  \DT  \cone(\ovl{\ovl{\phi}}) \\
& \simeq  \ \cone\Big(\mathrm{Id} \DT \ovl{\ovl{\phi}}:\CFDAhat(\AZ) \DT \CFDhat(\ovl{\ovl{\HD_{\infty}}})
\to \CFDAhat(\AZ) \DT \CFDhat(\ovl{\ovl{\HD_{-1}}})\Big).
\end{align*}
Here, by an abuse of notation, $\widetilde{\Psi}$ denotes the chain homotopy induced by a sequence of Heegaard moves from $\AZ\cup\overline{\overline{\HD_{*}}}$ to $\HD_{*}$ for $*=-1$ or $\infty$. 

Since the top-right entry of the matrix in Equation \ref{eq:psi-filtration-preserving} is 0, $\Psi$ is filtration preserving, and therefore $\iota$ is also.
\end{proof}  

Thus, the involutive chain complex
\[
    \CFIhat(\Sigma(L)) = \cone\left(Q(1+\iota):\CFhat(\Sigma(L))\to Q\cdot\CFhat(\Sigma(L))[-1]\right)
\]
is $\{0,1\}^n$-filtered, induced by the $\{0,1\}^n$-filtration on $\CFhat(\Sigma(L))$ viewed as a boxed tensor product in Equation~\ref{eq:pairing}  as discussed in Section~\ref{sec:borderedss}. Here, we are assuming the link $L$ has a diagram $D$ with $n$ crossings. This filtration gives a spectral sequence with chain group  
\[\CFhat(\Sigma(D_v))\oplus Q\cdot \CFhat(\Sigma(D_v))[-1]\]
at each vertex $v\in\{0,1\}^n$ and converging to $\HFIhat(\Sigma(L))$.

\subsection{Identification of $(E_1, d_1)$ with the Bar-Natan complex}

In this section, we prove that the first page of the spectral sequence is isomorphic to the reduced Bar-Natan chain  complex for the mirror image of the link. 

\begin{proposition}\label{prop:vertex}
 Let $D$ be a diagram with $n$ crossings for the link $L$. Then, for any vertex $v\in \{0,1\}^n$ the $\BF[Q]/(Q^2)$-module at the vertex $v$ of the first page of the spectral sequence is isomorphic to $\HFIhat(\Sigma(D_v))$.
\end{proposition}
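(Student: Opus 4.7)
The plan is to identify the $E_0$ page at each vertex $v$ with the involutive chain complex $\CFIhat(\Sigma(D_v))$, and then pass to homology. Recall that $\CFIhat(\Sigma(L))$ is defined as the mapping cone of $Q(1+\iota)$ on $\CFhat(\Sigma(L))$, and that the $\{0,1\}^n$-filtration on $\CFhat(\Sigma(L))$ induced by the pairing in Equation \ref{eq:pairing} extends to $\CFIhat(\Sigma(L))$ precisely because $\iota$ is homotopic to a filtered map by Theorem \ref{thm:flr-iota}. Thus at vertex $v$ the $E_0$ page is
\[
    \cone\Bigl(Q(1+\iota^{(v)}):\CFhat(\Sigma(D_v)) \to Q\cdot \CFhat(\Sigma(D_v))[-1]\Bigr),
\]
where $\iota^{(v)}$ denotes the diagonal (filtration-preserving) component of $\iota$ at vertex $v$, and the underlying Heegaard Floer differential at vertex $v$ is (tautologically) the ordinary $\partial$ on $\CFhat(\Sigma(D_v))$ after applying the pairing theorem to the resolved tensor product.

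The central step is then to show that $\iota^{(v)}$ is chain homotopic to the honest Heegaard Floer involution on $\CFhat(\Sigma(D_v))$. My approach is to apply Lemma \ref{lem:borinvolution} twice: once to the original diagram $D$, yielding the formula $\iota = \Psi \circ \Omega \circ \eta$ of Equation \ref{eq:binv}, and once to the totally-resolved diagram $D_v$ (which is an unlink), yielding a parallel formula $\iota_{D_v} = \Psi_v \circ \Omega_v \circ \eta_v$ for the involution on $\CFhat(\Sigma(D_v))$. Since $\eta$ and $\Omega$ are manifestly filtered (they involve only the orientation reversal and the Auroux--Zarev insertion, neither of which touches the mapping cone decomposition of the $\CFDAhat(\Sigma(B_i))$), their diagonal parts at vertex $v$ are exactly $\eta_v$ and $\Omega_v$. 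The remaining factor $\Psi$ has nontrivial off-diagonal part only through the homotopies $G$ appearing in the block form from the proof of Theorem \ref{thm:flr-iota} (see Equation \ref{eq:psi-filtration-preserving}); its diagonal entry is exactly the tensor product of the maps $\widetilde\Psi$ on the resolved summands, which is $\Psi_v$.

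From here one concludes that the $E_0$ page at vertex $v$ is chain homotopy equivalent to $\CFIhat(\Sigma(D_v))$ as an $\BF[Q]/(Q^2)$-module, so the $E_1$ page at vertex $v$ is $\HFIhat(\Sigma(D_v))$, as desired. Finally, since $\Sigma(D_v)$ is a connected sum of copies of $S^1\times S^2$, Proposition \ref{prop:vertexmod} identifies this with the explicit free $\BF[Q]/(Q^2)$-module $\Lambda^*H_1(\Sigma(D_v))\otimes\BF[Q]/(Q^2)$.

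The main obstacle I anticipate is the bookkeeping required to verify cleanly that the diagonal part of $\iota$ at vertex $v$ agrees with $\iota_{D_v}$; this requires carefully tracking how each ingredient ($\eta$, $\Omega_1$, $\Omega_2$, and each factor of $\Psi$) behaves under the identification of $\CFDAhat(\HD_{B_i})$ with the cone of $f_i:\CFDAhat(\Sigma(B_i^1))\to\CFDAhat(\Sigma(B_i^0))$, and using that the upper-triangular block form of $\widetilde\Psi$ from Equation \ref{eq:psi-filtration-preserving} only contributes to filtration-\emph{decreasing} components. Once this is established, the identification of the $E_1$-page at $v$ with $\HFIhat(\Sigma(D_v))$ follows formally.
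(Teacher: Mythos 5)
Your proposal is correct and follows essentially the same route as the paper: construct $\Psi_v\circ\Omega_v\circ\eta_v$ by substituting the resolved solid-torus fillings $\HD_{w_i}$ into Equation~\ref{eq:binv}, show it is chain homotopic to the Heegaard Floer involution on $\CFhat(\Sigma(D_v))$ by the argument of Lemma~\ref{lem:borinvolution}, and identify it with the diagonal component of $\iota$ via the upper-triangular block form in the proof of Theorem~\ref{thm:flr-iota}. One small caveat: Lemma~\ref{lem:borinvolution} as stated is for the crossing decomposition of $D$ itself, so for $D_v$ you should say ``by an argument analogous to the proof of Lemma~\ref{lem:borinvolution}'' rather than invoking the lemma directly; the paper phrases it this way and your bookkeeping is otherwise the same.
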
 

\begin{proof} For each vertex $v\in \{0,1\}^n$ we have the chain group \[\CFhat(\Sigma(D_v))\oplus Q\cdot \CFhat(\Sigma(D_v))[-1].\] The complex $\CFhat(\Sigma(D_v))$ is identified with 

\begin{equation}\label{eq:vertex}
    \CFhat(\Sigma(D_v))\simeq\CFAhat(\HD_{B_0}) \DT \CFDAAhat(\HD^1) \DT \CFDhat(\HD_{w_1}) \DT \cdots \DT \CFDhat(\HD_{B_{n+1}})
\end{equation}
where $w_i=\infty$ if $v_i=1$, otherwise $w_i=-1$. In Equation \ref{eq:binv}, replace the $i$-th copy of $\HD_0$ and $\ovl{\ovl{\HD_0}}$ with $\HD_{w_i}$ and $\ovl{\ovl{\HD_{w_i}}}$, respectively, for every $1\le i\le n$ and define a chain map $\Psi_v\circ\Omega_v\circ\eta_v$ on $\CFhat(\Sigma(D_v))$. By an argument analogous to the proof of Lemma \ref{lem:borinvolution}, one can show that this map is chain homotopic to the Heegaard Floer involutive map on $\CFhat(\Sigma(D_v))$. On the other hand, it follows from the proof of Theorem \ref{thm:flr-iota} that $\Psi_v\circ\Omega_v\circ\eta_v$ is equal to the degree zero term of the involutive map on $\CFhat(\Sigma(L))$. Hence, on the $E_1$ page we get $\HFIhat(\Sigma(D_v))$ at each vertex $v\in\{0,1\}^n$.
\end{proof}

The next propositions show that the second page of the involutive  branched double cover spectral sequence is isomorphic to $\BNred(\ovl{L})$.
 
\begin{proposition}\label{prop:diff-surgery}
Let $v,v'\in\{0,1\}^n$ be a pair of vertices that differ at one index (that is, $v_i=1$ and $v'_i=0$ for some $i$, and $v_j = v_j'$ for $j \neq i$). Then, $\Sigma(D_{v'})$ is obtained by $-1$-surgery on a framed knot $\gamma_i\subset\Sigma(D_v)$. The differential of the first page of the spectral sequence is a map from $\HFIhat(\Sigma(D_v))$ to $\HFIhat(\Sigma(D_{v'}))$ which agrees with the surgery map defined in \cite[Theorem 7.1]{HendricksLipshitz}. More precisely, fixing the notation from \S \ref{sec:involutivess},  consider the bordered Heegaard diagram \[\HD_{v,v'}=\HD_{B_0}\cup\HD_{B_1}\cup\cdots\cup\HD^i\cup\HD_{B_{i+1}}\cup\cdots\cup\HD_{B_{n+1}}\]
for the complement of $\gamma_i$ in $\Sigma(D_v)$. Thus, $\HD_{v,v'}\cup\HD_{\infty}$ and $\HD_{v,v'}\cup\HD_{-1}$ are Heegaard diagrams for $\Sigma(D_v)$ and $\Sigma(D_{v'})$, respectively. Let $f=F \DT \mathrm{Id}$ denote the map from 
\[\CFAhat(\HD_{B_0}) \DT \CFDAAhat(\HD^1) \DT \cdots \DT \CFDAAhat(\HD^i) \DT \CFDhat(\HD_{\infty}) \DT \cdots\CFDhat(\HD_{B_{n+1}})\]
to $\CFAhat(\HD_{v,v'}) \DT \CFDhat(\HD_{\infty})$, where $F$ is the pairing map. Similarly, we define $f'$ by replacing $\infty$ with $-1$. Then, \[(f',f')\circ d_{v,v'}\simeq \begin{bmatrix}\mathrm{Id} \DT \phi&0\\ G'_{v,v'}&\mathrm{Id} \DT \phi\end{bmatrix}\circ (f,f)\]
where $d_{v,v'}$ is the part of differential on $\CFIhat(\Sigma(L))$ that maps $\CFIhat(\Sigma(D_v))$ to $\CFIhat(\Sigma(D_{v'}))$, and $\begin{bmatrix}\mathrm{Id} \DT \phi&0\\ G'_{v,v'}&\mathrm{Id} \DT \phi\end{bmatrix}$ is the involutive surgery map from $\CFIhat(\HD_{v,v'}\cup\HD_{\infty})$ to $\CFIhat(\HD_{v,v'}\cup\HD_{-1})$. Here, $G'_{v,v'}$ denotes the homotopy map $G'$ as defined in Section~\ref{sec:surgery}. 
\end{proposition}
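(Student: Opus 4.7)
The plan is to unpack the three entries of the differential $d_{v,v'}$ of the involutive complex $\CFIhat(\Sigma(L))$ restricted to the pair of vertices $v, v'$, and identify each with the corresponding entry of the Hendricks-Lipshitz involutive surgery matrix $\begin{bmatrix}\mathrm{Id}\DT\phi & 0 \\ G'_{v,v'} & \mathrm{Id}\DT\phi\end{bmatrix}$.

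The involutive differential on $\CFIhat(\Sigma(L)) = \cone(Q(1+\iota))$ has the form $\begin{bmatrix} \partial & 0 \\ Q(1+\iota) & \partial \end{bmatrix}$. Consequently $d_{v,v'}$ decomposes into two contributions: the part of $\partial$ that drops the $\{0,1\}^n$-filtration by one at coordinate $i$, which accounts for both diagonal entries, and the part of $Q(1+\iota)$ that drops the filtration by one at coordinate $i$, which accounts for the off-diagonal entry (the identity being filtration preserving). Using the cone description $\CFDhat(\HD_0)\simeq \cone(\phi\colon\CFDhat(\HD_{\infty})\to\CFDhat(\HD_{-1}))$ at the $i$-th tensor position together with \cite[Proposition 4.1]{LOTborderedss}, the filtration-dropping part of $\partial$ at coordinate $i$ becomes exactly $\mathrm{Id}\DT\phi$ after pairing via $f$ and $f'$; this identifies both diagonal entries of $d_{v,v'}$.

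For the off-diagonal, I would invoke Lemma \ref{lem:borinvolution} to replace $\iota$ by the chain homotopic composition $\Psi\circ\Omega\circ\eta$. Following the proof of Theorem \ref{thm:flr-iota}, the maps $\eta$ and $\Omega$ preserve the filtration and act block-diagonally with respect to the cone decomposition $\CFDhat(\HD_0)\simeq\cone(\phi)$, so any filtration-dropping contribution must come from $\Psi$, and in fact only from its $i$-th tensor factor $\widetilde{\Psi}$. By the analogue of Equation \ref{eq:psi-filtration-preserving} at coordinate $i$, this factor is chain homotopy equivalent to $\begin{bmatrix}\widetilde{\Psi} & 0 \\ G & \widetilde{\Psi}\end{bmatrix}$ acting between the cones of $\ovl{\ovl{\phi}}$ and $\phi$, where $G$ is the map of \cite[Lemma 7.2]{HendricksLipshitz}. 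Tensoring with identities at the remaining positions and applying the pairings $f$ and $f'$ packages this off-diagonal term into precisely the homotopy $G'_{v,v'}$ as defined in Section \ref{sec:surgery}.

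Assembling these pieces shows that, after applying $(f,f)$ to the source and $(f',f')$ to the target, the map $d_{v,v'}$ intertwines up to homotopy with $\begin{bmatrix}\mathrm{Id}\DT\phi & 0 \\ G'_{v,v'} & \mathrm{Id}\DT\phi\end{bmatrix}$, which is the claim. The main obstacle is the bookkeeping involved in carrying the cone decomposition at position $i$ through the full composition $\Psi\circ\Omega\circ\eta$: one must verify that $\eta$, $\Omega$, and the $\Psi$-factors at positions $j\neq i$ all respect the cone splitting at position $i$, contributing only diagonally there. This reduces to the observation that those maps are defined by Heegaard moves supported away from the $i$-th torus boundary, so they act as the identity in the $\phi$-direction; a rigorous verification requires tracking the tensor factors through each of the diagrams \ref{eq:eta-trimod-com} through \ref{eq:psi-com} at the coordinate in question.
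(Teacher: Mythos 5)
Your decomposition of $d_{v,v'}$ into a filtration-dropping piece of $\partial$ (giving the two diagonal entries via the cone description of $\CFDhat(\HD_0)$) and a filtration-dropping piece of $Q\iota$ (giving the off-diagonal entry) matches the strategy of the paper's proof exactly, and your identification of the off-diagonal contribution as coming from replacing the $i$-th factor $\widetilde{\Psi}$ by $G$ (via Equation~\ref{eq:psi-filtration-preserving}) is also the paper's observation. However, the step you compress into ``tensoring with identities at the remaining positions and applying the pairings $f$ and $f'$ packages this off-diagonal term into precisely $G'_{v,v'}$'' is where the actual work lives, and it is not a bookkeeping exercise. The issue is that the two sides of the asserted homotopy are built from different box-tensor decompositions: on one side you have $\Psi_v^i\circ\Omega_v\circ\eta_v$ operating on the fully expanded tensor product of bordered pieces, while $G'_{v,v'} = (\Psi_{v,v'}\DT G)\circ\Omega_{v,v'}\circ\eta_{v,v'}$ is defined with the exterior $\HD_{v,v'}$ already glued together. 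The paper bridges these by introducing intermediate maps $\Omega_{v,1}$ and $\Omega_{v,1^c}$, then stringing together several homotopy-commuting diagrams whose commutativity is justified by (i) \cite[Lemma 2.3.3]{LOTbimodules}, the naturality of the box-tensor pairing with respect to homotopy equivalences, (ii) \cite[Lemma 5.6]{HendricksLipshitz} for the Heegaard-move-induced maps, and (iii) naturality of Heegaard Floer homology \cite{JuhaszThurston} for a pentagon that must homotopy-commute.

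Your heuristic that the maps at positions $j\neq i$ ``are defined by Heegaard moves supported away from the $i$-th torus boundary, so they act as the identity in the $\phi$-direction'' is the right intuition for why the diagonal and off-diagonal pieces stay separated, but it is not the mechanism the proof runs on; the actual verification requires the naturality results above rather than a locality argument. Also note that for the diagonal entries the citation you want is \cite[Lemma 2.3.3]{LOTbimodules} (to get $(\mathrm{Id}\DT\phi)\circ f \simeq f'\circ d_{v,v'}^{11}$), not \cite[Proposition 4.1]{LOTborderedss}, which concerns identifying the $E_1$ differentials with Khovanov differentials and is used at a different stage. So: the skeleton of your proposal is the same as the paper's, but the central homotopy $G_{v,v'}\circ f \simeq f'\circ \iota_{v,v'}$ is asserted rather than argued, and the diagram chases needed to establish it are the substance of the proposition.
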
 

\begin{proof}
Note that $d_{v,v'}=\begin{bmatrix}d_{v,v'}^{11}&0\\\iota_{v,v'}&d_{v,v'}^{22}\end{bmatrix}$ where under the identification of Equation \ref{eq:vertex} for $\CFhat(\Sigma(D_v))$ and a similar identification for $\CFhat(\Sigma(D_{v'}))$
\[d_{v,v'}^{11}=d_{v,v'}^{22}=\mathrm{Id}^{\DT 2i} \DT \phi \DT \mathrm{Id}^{\DT 2n-2i+1}.\] 
Here, as before $\phi$ is the morphism from $\CFDhat(\HD_{w_i})=\CFDhat(\HD_{\infty})$ to $\CFDhat(\HD_{w'_i})=\CFDhat(\HD_{-1})$ whose mapping cone is $\CFDhat(\HD_0)$. Moreover, using the notation in the proof of Proposition \ref{prop:vertex}, we have $\iota_{v,v'}=\Psi_{v}^i\circ\Omega_v\circ\eta_v$ where $\Psi_v^i$ is obtained from $\Psi_v$ by replacing the $i$-th copy of $\widetilde{\Psi}$ with $G$.

By \cite[Lemma 2.3.3]{LOTbimodules} $(\mathrm{Id} \DT \phi)\circ f\simeq f'\circ d_{v,v'}^{11}$ and so we are left with checking that $G_{v,v'}\circ f\simeq f'\circ\iota_{v,v'}.$
Recall that, 
$G_{v,v'}=(\Psi_{v,v'} \DT  G)\circ\Omega_{v,v'}\circ\eta_{v,v'}$ is given by:
\begin{center}
\begin{align*}
\eta_{v,v'}:& \ \CFAhat(\HD_{v,v'}) \DT \CFDhat(\HD_{\infty})\longrightarrow \CFAhat(\overline{\overline{\HD_{v,v'}}}) \DT \CFDhat(\overline{\overline{\HD_{\infty}}})\\
\Omega_{v,v'}:& \  \CFAhat(\overline{\overline{\HD_{v,v'}}}) \DT \CFDhat(\overline{\overline{\HD_{\infty}}})\to \CFAhat(\overline{\overline{\HD_{v,v'}}}) \DT \CFDAhat(\mathbb{I}) \DT \CFDhat(\overline{\overline{\HD_{\infty}}})\\
&\to \CFAhat(\overline{\overline{\HD_{v,v'}}}) \DT \CFDAhat(\overline{\AZ}) \DT \CFDAhat(\AZ) \DT \CFDhat(\overline{\overline{\HD_{\infty}}})\\
\Psi_{v,v'} \DT  G:& \ \CFAhat(\overline{\overline{\HD_{v,v'}}}) \DT \CFDAhat(\overline{\AZ}) \DT \CFDAhat(\AZ) \DT \CFDhat(\overline{\overline{\HD_{\infty}}})\\
&\to \CFAhat(\HD_{v,v'}) \DT \CFDhat(\HD_{-1}).
\end{align*}
\end{center}
where $\Psi_{v,v'}$ from $\CFAhat(\overline{\overline{\HD_{v,v'}}}) \DT \CFDAhat(\overline{\AZ})$ to $\CFAhat(\HD_{v,v'})$ is induced by Heegaard moves from $\ovl{\ovl{\HD_{v,v'}}}\cup\overline{\AZ}$ to $\HD_{v,v'}$. Thus, we need to prove that 
\begin{equation}\label{eq:HLmaps}
(\Psi_{v,v'} \DT  G)\circ\Omega_{v,v'}\circ\eta_{v,v'}\circ f\simeq f'\circ\left(\Psi^i_v\circ\Omega_v\circ\eta_v\right)
\end{equation}

To simplify the equations, we assume that $n=i=1$ and use the bracket notation for the rest of the proof. The general case is similar. 

Let $\Omega_{v,1}$ and $\Omega_{v,1^c}$ be the maps 
\[\begin{split}
&\Omega_{v,1}: [\ovl{\ovl{\HD_{B_0}}}] \DT [\ovl{\ovl{\HD^1}}] \DT [\ovl{\ovl{\HD_{\infty}}}] \DT [\ovl{\ovl{\HD_{B_2}}}]\longrightarrow  [\ovl{\ovl{\HD_{B_0}}}] \DT [\ovl{\ovl{\HD^1}}] \DT [\ovl{\AZ}] \DT [\AZ] \DT [\ovl{\ovl{\HD_{\infty}}}] \DT [\ovl{\ovl{\HD_{B_{2}}}}]\\
&\Omega_{v,1^c}: [\ovl{\ovl{\HD_{B_0}}}] \DT [\ovl{\ovl{\HD^1}}] \DT [\ovl{\AZ}] \DT [\HD_{-1}] \DT [\ovl{\ovl{\HD_{B_2}}}]\longrightarrow [\ovl{\ovl{\HD_{B_0}}}] \DT [\ovl{\AZ}] \DT [\AZ] \DT [\ovl{\ovl{\HD^1}}] \DT [\ovl{\AZ}]\\
&\hspace{8.5cm} \DT [\HD_{-1}] \DT [\ovl{\AZ}] \DT [\AZ] \DT [\ovl{\ovl{\HD_{B_2}}}]\end{split}.\]
A similar argument as in the proof of commutativity for Equation \ref{eq:eta-trimod-com}, along with \cite[Lemma 2.3.3]{LOTbimodules} implies that
\[\ovl{\ovl{f'}} \circ (\mathrm{Id}^{3} \DT  G \DT \mathrm{Id})\circ\Omega_{v,1}\circ \eta_v \simeq (\mathrm{Id}^2 \DT  G)\circ\Omega_{v,v'}\circ\eta_{v,v'}\circ f,\] where $\ovl{\ovl{f'}}=\ovl{\ovl{F}} \DT \mathrm{Id}$ and $\ovl{\ovl{F}}$ is the bordered Floer homology pairing map. 

On the other hand, by \cite[Lemma 2.3.3]{LOTbimodules} the following diagram commutes up to homotopy.
\[
\xymatrix{
[\ovl{\ovl{\HD_{B_0}}}] \DT [\ovl{\ovl{\HD^1}}] \DT [\ovl{\ovl{\HD_{\infty}}}] \DT [\ovl{\ovl{\HD_{B_2}}}]\ar[d]_{\Omega_v}\ar[r]^{\Omega_{v,1}}& [\ovl{\ovl{\HD_{B_0}}}] \DT [\ovl{\ovl{\HD^1}}] \DT [\ovl{\AZ}] \DT [\AZ] \DT [\ovl{\ovl{\HD_{\infty}}}] \DT [\ovl{\ovl{\HD_{B_{2}}}}]\ar[d]_{\mathrm{Id}^3 \DT  G \DT \mathrm{Id}}\\
{\begin{split}&[\ovl{\ovl{\HD_{B_0}}}] \DT  [\ovl{\AZ}] \DT [\AZ] \DT [\ovl{\ovl{\HD^1}}] \DT [\ovl{\AZ}]\\ & \DT [\AZ] \DT [\ovl{\ovl{\HD_{\infty}}}] \DT [\ovl{\AZ}] \DT [\AZ] \DT [\ovl{\ovl{\HD_{B_{2}}}}]\end{split}}
\ar[d]_{\Psi^1_v}&[\ovl{\ovl{\HD_{B_0}}}] \DT [\ovl{\ovl{\HD^1}}] \DT [\ovl{\AZ}] \DT [\HD_{-1}] \DT [\ovl{\ovl{\HD_{B_2}}}]\ar[d]_{\Omega_{v,1^c}}\\
[\HD_{B_0}] \DT [\HD^1] \DT [\HD_{-1}] \DT [\HD_{B_2}]&\ar[l]_{\Psi_0 \DT \Psi_1 \DT \mathrm{Id} \DT \Psi_2} {\begin{split}&[\ovl{\ovl{\HD_{B_0}}}] \DT [\ovl{\AZ}] \DT [\AZ] \DT [\ovl{\ovl{\HD^1}}] \DT [\ovl{\AZ}]\\
& \DT [\HD_{-1}] \DT [\ovl{\AZ}] \DT [\AZ] \DT [\ovl{\ovl{\HD_{B_2}}}]\end{split}}
}
\]

So to prove Equation \ref{eq:HLmaps} we need to show that
 \[(\Psi_{v,v'} \DT \mathrm{Id})\circ\ovl{\ovl{f'}}\simeq f'\circ(\Psi_0 \DT \Psi_1 \DT \mathrm{Id} \DT \Psi_2)\circ\Omega^c_{v,1}.
 \]
 
 Next, consider the following diagram  
\[
\xymatrix{
[\ovl{\ovl{\HD_{B_0}}}] \DT [\ovl{\ovl{\HD^1}}] \DT [\ovl{\AZ}] \DT [\HD_{-1}] \DT [\ovl{\ovl{\HD_{B_{2}}}}]\ar[d]_{\Omega_{v,1^c}} \ar[r]& \CFhat(\ovl{\ovl{\HD_{v,v'}}}\cup\ovl{\AZ}\cup\HD_{-1})\ar[d]&\\
{\begin{split} &[\ovl{\ovl{\HD_{B_0}}}] \DT  [\ovl{\AZ}] \DT [\AZ] \DT [\ovl{\ovl{\HD^1}}] \DT [\ovl{\AZ}]\\ & \DT [\HD_{-1}] \DT [\ovl{\AZ}] \DT [\AZ] \DT [\ovl{\ovl{\HD_{B_{2}}}}]\end{split}} \ar[d]_{\Psi_0 \DT \Psi_1 \DT \mathrm{Id} \DT  \Psi_2} \ar[r] &\CFhat(\ovl{\ovl{\HD_{v,v'}'}}\cup \ovl{\AZ}\cup\HD_{-1})\ar[d]&\ar[ul] [\ovl{\ovl{\HD_{v,v'}}}] \DT [\ovl{\AZ}] \DT [\HD_{-1}]\ar[d]\\
[\HD_{B_0}] \DT [\HD^1] \DT [\HD_{-1}] \DT  [\HD_{B_0}] \ar[r]& \CFhat(\HD_{v,v'}\cup\HD_{-1})&\ar[l] [\HD_{v,v'}] \DT [\HD_{-1}]
}
\]
where $\HD_{v,v'}'=\HD_{B_0}\cup\ovl{\AZ}\cup\AZ\cup \HD^1\cup \ovl{\AZ}\cup\AZ\cup\HD_{B_2}$ and the vertical unlabeled arrows are induced by Heegaard moves or bordered Heegaard moves, while the rest of the unlabeled arrows are bordered Floer homology pairing maps. As in the proof of Lemma \ref{lem:borinvolution}, it follows from \cite[Lemma 5.6]{HendricksLipshitz} that the squares homotopy commute. Further, the same lemma \cite[Lemma 5.6]{HendricksLipshitz} along with the naturality of Heegaard Floer homology \cite{JuhaszThurston} implies that the pentagon homotopy commutes as well, and so we are done. 
\end{proof}

\begin{proposition}\label{prop:changebasis} For any vertex $v\in\{0,1\}^n$, there is an isomorphism $g_v$ of $\BF[Q]/(Q^2)$-modules between $\HFIhat(\Sigma(D_v))$ and $\CBNred(D_v)$ so that for every $v'\prec_1 v$ the following diagram commutes. 
\[\xymatrix{
\HFIhat(\Sigma(D_v))\ar[d]_{g_v}\ar[r]^{d_1}&\HFIhat(\Sigma(D_{v'}))\ar[d]_{g_{v'}}\\
\CBNred(D_v)\ar[r]&\CBNred(D_{v'})}
\]
Here, $d_1$ denotes the differential on the $E_1$ page of the spectral sequence from $\HFIhat(\Sigma(D_v))$ to $\HFIhat(\Sigma(D_{v'}))$, while the bottom horizontal arrow is the differential of the Bar-Natan complex $\CBNred(\ovl{D})$.
\end{proposition}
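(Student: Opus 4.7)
The plan is to extend the Lipshitz-Ozsv\'ath-Thurston identification of $E_1$ with reduced Khovanov homology from \cite[Proposition 4.1]{LOTborderedss} to the involutive / Bar-Natan setting by introducing an explicit $Q$-correction to the isomorphism at each vertex.

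First, by the proof of Proposition~\ref{prop:vertexmod}, the homotopy involution $\iota$ acts as the identity on $\CFhat(\Sigma(D_v))$ because $\Sigma(D_v) \cong \#^{|Z_v|-1}(S^1\times S^2)$, so by Proposition~\ref{prop:vertex} we have $\HFIhat(\Sigma(D_v)) \cong \HFhat(\Sigma(D_v)) \otimes \BF[Q]/(Q^2)$ as $\BF[Q]/(Q^2)$-modules. I would fix at each vertex $v$ the LOT identification $g_v^{(0)}\colon \HFhat(\Sigma(D_v)) \to \CKhred(D_v)$. By \cite[Proposition 4.1]{LOTborderedss} (equivalently \cite[Theorem 6.3]{OzsvathSzaboss}), combined with Proposition~\ref{prop:diff-surgery} and the non-$Q$ part of Proposition~\ref{prop:surgerycomp}, these identifications make the non-$Q$ part of the involutive square commute: $g_{v'}^{(0)}$ intertwines the OS surgery map $d^{(0)}$ with the reduced Khovanov merge/split map $d_{\mathrm{Kh}}$ for every edge $v\succ_1 v'$.

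Second, I would extend $g_v^{(0)}$ to a $\BF[Q]/(Q^2)$-module homomorphism of the form $g_v(x + Qy) = g_v^{(0)}(x) + Q\bigl(h_v(x) + g_v^{(0)}(y)\bigr)$ for an $\BF$-linear correction $h_v\colon \HFhat(\Sigma(D_v)) \to \CKhred(D_v)$ to be determined. By Propositions~\ref{prop:diff-surgery} and~\ref{prop:surgerycomp}, the $Q$-part of the edge differential on $\HFIhat$ is the map $G'$, which vanishes on merge edges and equals the $H_1$-inclusion $i$ on split edges. Writing the Bar-Natan differential as $d_{\mathrm{Kh}} + Q\, d^Q$, commutativity of the involutive square along $v\succ_1 v'$ reduces to the $\BF$-linear cocycle equation
\[
    d_{\mathrm{Kh}}\circ h_v + h_{v'}\circ d^{(0)}
    \;=\; g_{v'}^{(0)}\circ d^{(1)} + d^Q \circ g_v^{(0)},
\]
where $d^{(1)} = G'$. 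Since $g_v^{(0)}$ is an isomorphism and $d^Q \neq 0$ while $d^{(1)}=0$ for merges, the naive choice $h_v = 0$ fails, forcing genuine $Q$-corrections.

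The main obstacle is then to exhibit a consistent family $\{h_v\}$ satisfying this equation on every edge simultaneously. The plan is a local case analysis: for each edge, the defect $g_{v'}^{(0)}\circ d^{(1)} + d^Q \circ g_v^{(0)}$ depends only on the local picture of the edge (merge versus split, and whether the basepoint lies on one of the active circles), since all inactive circles act as passive tensor factors on both sides. Using the explicit formulas for $d^Q$ from \S\ref{sec:BN-complex-intro} and for $d^{(1)}=G'$ from Proposition~\ref{prop:surgerycomp}, I would exhibit for each local model an explicit $h_v$ of the required form (for example, for a merge of two non-based circles, $h_v$ is supported on $\vv_-\otimes\vv_-$, and for a split involving the basepoint, $h_{v'}$ absorbs the $Q\cdot i(\vv_-)$ discrepancy), and check compatibility across adjacent edges. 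This reduces Proposition~\ref{prop:changebasis} to a finite combinatorial verification.
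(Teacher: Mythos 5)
Your setup is essentially the same as the paper's: split the vertexwise isomorphism into an $\BF$-linear part $g_v^{(0)}$ (the Ozsv\'ath--Szab\'o/LOT identification of $\HFhat(\Sigma(D_v))$ with $\CKhred(D_v)$ via $\Lambda^*H_1$) plus a $Q$-linear correction, and reduce commutativity of the involutive square to a cocycle equation for the corrections $h_v$. Your derivation of that cocycle equation, and your observation that $h_v=0$ cannot work (since on merge edges $d^{(1)}=G'=0$ while $d^Q\circ g_v^{(0)}\neq 0$), are both correct. This is exactly the structure underlying the paper's argument: the paper's isomorphism $g_{v'}$ consists of the arc-basis identification $\phi\otimes\mathrm{Id}$ precomposed with the explicit $\BF[Q]/(Q^2)$-linear change of basis of Equation~\ref{Eq:chbasis}, whose bottom line $[\widetilde{\gamma_k}]\wedge[\widetilde{\gamma_k}']\wedge\xi \mapsto [\widetilde{\gamma_k}]\wedge[\widetilde{\gamma}_{k+1}]\wedge\xi + Q[\widetilde{\gamma_k}]\wedge\xi$ is precisely a $Q$-correction on top of the homological substitution $[\widetilde{\gamma_k}']=[\widetilde{\gamma_k}]+[\widetilde{\gamma}_{k+1}]$.

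The gap is that you never actually produce the corrections; you only outline a local case analysis and assert a ``finite combinatorial verification.'' The entire mathematical content of Proposition~\ref{prop:changebasis} is the existence of a \emph{consistent} family $\{h_v\}$ — a priori the cocycle condition could be unsolvable, and the fact that it has a clean solution is not obvious from your setup. The paper resolves this with the explicit change of basis formula~\ref{Eq:chbasis}, stated uniformly in terms of the arc classes $[\widetilde{\gamma_i}]$; from it one checks directly that (change of basis) $\circ\,\delta = \Delta$ on split edges and $m'\circ$ (change of basis) $=m$ on merge edges, and crucially the \emph{same} change of basis works for both. Your sketch (``$h_v$ supported on $\vv_-\otimes\vv_-$'' for merges, ``$h_{v'}$ absorbs the $Q\cdot i(\vv_-)$ discrepancy'' for splits) is vague enough that the compatibility-across-edges issue you yourself flag is unaddressed, and it also misses that $h_v=0$ fails on split edges as well, not just merges (for $\xi$ involving the active class $[\widetilde{\gamma_k}]$, one has $d^Q g_v^{(0)}(\xi)=0$ but $g_{v'}^{(0)}d^{(1)}(\xi)\neq 0$). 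To complete the argument you would need to write down the corrections explicitly — which amounts to rediscovering the paper's Equation~\ref{Eq:chbasis} — and verify the two edge identities and their mutual consistency.
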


\begin{proof} 
Recall that at each vertex $v\in \{0,1\}^n$
\[\CBNred(D_v)=\widetilde{\CKh}(D_v)\otimes \BF[Q]/(Q^2)\]
which is the subspace of $\CKh(D_v)\otimes \BF[Q]/(Q^2)$ where the circle containing the based point is marked with $\vv_-$. 
In \cite[Proposition 6.2]{OzsvathSzaboss} $\widetilde{\CKh}(D_v)$ is identified with $\Lambda^*(H_1(\Sigma(D_v)))$ as follows.  

Suppose $D_v=\coprod_{i=0}^kS_i$ is a disjoint union of planar circles $S_i$ and $S_0$ contains the base point. Then, consider a pairwise disjoint collection of arcs $\gamma_1,\gamma_2,\cdots, \gamma_k$ such that $\gamma_i$ connects $S_0$ to $S_i$. Then, the preimage of each $\gamma_i$ is a circle $\widetilde{\gamma_i}$ in $\Sigma(D_v)\cong \#^k(S^1\times S^2)$ and the homology classes $[\widetilde{\gamma_1}], [\widetilde{\gamma_2}],\cdots,[\widetilde{\gamma_k}]$ give a basis for $H_1(\Sigma(D_v))$. For a primitive class $\xi\in\Lambda^*\langle [\widetilde{\gamma_1}], [\widetilde{\gamma_2}],\cdots,[\widetilde{\gamma_k}]\rangle$ let $\phi(\xi)\in \widetilde{\CKh}(D_v)$  be the generator that assigns $\vv_+$ to the circle $S_0$ containing the base point, and assigns $\vv_-$ to the circle $S_i$ if $[\widetilde{\gamma}_i]$ appears in $\xi$ and $\vv_+$ to the circle $S_i$ otherwise. This map descends to an isomorphism from $\Lambda^*\langle [\widetilde{\gamma_1}], [\widetilde{\gamma_2}],\cdots,[\widetilde{\gamma_k}]\rangle$ to $\widetilde{\CKh}(D_v)$.  Thus, $\phi\otimes\mathrm{Id}$ gives an isomorphism between $\Lambda^*\langle [\widetilde{\gamma_1}], [\widetilde{\gamma_2}],\cdots,[\widetilde{\gamma_k}]\rangle\otimes \BF[Q]/(Q^2)$ and $\CBNred(D_v)$. 

Suppose $v,v'\in\{0,1\}^n$ and $v'\prec_1 v$. Further, assume $D_{v'}$ is obtained from $D_v$ by splitting $S_k$. Propositions \ref{prop:vertexmod}, \ref{prop:surgerycomp}, and \ref{prop:diff-surgery} imply that the diagram 
\begin{center}
\begin{tikzcd}
  \HFIhat(\Sigma(D_v)) \arrow[r, "d_{1}"] \arrow[d, "\cong"]
    & \HFIhat(\Sigma(D_{v'})) \arrow[d, "\cong"] \\
  \Lambda^*\langle [\widetilde{\gamma_1}], [\widetilde{\gamma_2}],\cdots,[\widetilde{\gamma_k}]\rangle \otimes \BF[Q]/(Q^2) \arrow[r, "\delta"]
&  \Lambda^*\langle [\widetilde{\gamma_1}], [\widetilde{\gamma_2}],\cdots,[\widetilde{\gamma_k}], [\widetilde{\gamma_k}']\rangle\otimes \BF[Q]/(Q^2) \end{tikzcd}
\end{center}
commutes, where $\delta(\xi)=\xi\wedge [\widetilde{\gamma}_k']+Q\xi$ for any $\xi\in \Lambda^*\langle [\widetilde{\gamma_1}], [\widetilde{\gamma_2}],\cdots,[\widetilde{\gamma_k}]\rangle$. 
Here, $\widetilde{\gamma}_k'$ is the preimage of an arc $\gamma_k'$ connecting $S_k$ to $S_{k+1}$ in $D_{v'}$ in $\Sigma(D_{v'})$, as in Figure \ref{fig:basis}. 

\begin{figure}
    \centering
        \includestandalone{images/basis}
    \caption{}
    \label{fig:basis}
\end{figure}

Define a change of basis map (of $\BF[Q]/(Q^2)$-modules) from $\Lambda^*\langle [\widetilde{\gamma_1}],\cdots,[\widetilde{\gamma_k}], [\widetilde{\gamma_k}']\rangle\otimes \BF[Q]/(Q^2)$ to $\Lambda^*\langle [\widetilde{\gamma_1}],\cdots,[\widetilde{\gamma_k}], [\widetilde{\gamma}_{k+1}]\rangle\otimes \BF[Q]/(Q^2)$ by sending:
\begin{align}\label{Eq:chbasis}
    \xi &\mapsto \xi \nonumber\\
      [\widetilde{\gamma_k}] \wedge \xi  &\mapsto   [\widetilde{\gamma_k}] \wedge \xi  \nonumber\\
    [\widetilde{\gamma_k}']\wedge\xi &\mapsto  [\widetilde{\gamma_k}]\wedge\xi+ [\widetilde{\gamma}_{k+1}]\wedge\xi \\
    [\widetilde{\gamma_k}]\wedge[\widetilde{\gamma_k}']\wedge\xi &\mapsto  [\widetilde{\gamma_k}]\wedge[\widetilde{\gamma}_{k+1}]\wedge\xi+Q [\widetilde{\gamma_k}]\wedge\xi\nonumber
\end{align}
where $\xi\in\Lambda^*\langle [\widetilde{\gamma_1}],\cdots,[\widetilde{\gamma_{k-1}}]\rangle$
(compare this with \cite[Lemma 4]{Lin}). 
It is easy to check that composing $\delta$ with this change of basis map it coincides with the Bar-Natan split map $\Delta$.

Conversely, suppose $D_{v'}$ is obtained from $D_v$ by merging two components. For instance, assume $D_v=\amalg_{i=0}^{k+1}S_i$ and $D_{v'}$ is obtained by merging $S_k$ and $S_{k+1}$. As before, Propositions
\ref{prop:vertexmod}, \ref{prop:surgerycomp}, and \ref{prop:diff-surgery} imply that the diagram 
\begin{center}
\begin{tikzcd}
  \HFIhat(\Sigma(D_v)) \arrow[r, "d_1"] \arrow[d, "\cong"]
    & \HFIhat(\Sigma(D_{v'})) \arrow[d, "\cong"] \\
  \Lambda^*\langle [\widetilde{\gamma_1}], [\widetilde{\gamma_2}],\cdots,[\widetilde{\gamma_k}],[\widetilde{\gamma_k}']\rangle \otimes \BF[Q]/(Q^2) \arrow[r, "m'"]
&  \Lambda^*\langle [\widetilde{\gamma_1}], [\widetilde{\gamma_2}],\cdots,[\widetilde{\gamma_k}]\rangle\otimes \BF[Q]/(Q^2)
\end{tikzcd}
\end{center}
commutes, where $m'(\xi)=\xi$ and $m'([\widetilde{\gamma}_k']\wedge\xi)=0$ for every $\xi\in \Lambda^*\langle [\widetilde{\gamma_1}],\cdots,[\widetilde{\gamma_k}]\rangle$. 
It is easy to check that composing $m'$ with the change of basis in Equation \ref{Eq:chbasis} is the merge map $m$ for Bar-Natan homology. 
\end{proof}

\section{Examples}

We provide examples where the involutive branched double cover spectral sequence collapses immediately, and one where it does not.

\subsection{Khovanov $\delta$-thin knots}

The $\delta$-grading on Khovanov homology is given by $\delta = q - 2h$
where $h$ is the homological grading and $q$ is the quantum grading. 
A knot $K \subset S^3$ is \emph{(Khovanov) $\delta$-thin} if its reduced Khovanov homology is supported on a single $\delta$-grading. This class of knots includes all quasi-alternating knots \cite{Manolescu-Ozsvath-QA}, for example.

If a knot $K$ is $\delta$-thin, then its total dimension is determined by its Jones polynomial, and we have 
\[
    \dim_{\BF} \Khred(K) = | \det(K) |
\]
where $\det$ is the knot determinant. 
On the other hand, a 3-manifold $Y$ is an \emph{$L$-space} if $\HFhat(Y)$ is as small as possible, i.e.\ 
\[
    \dim_{\BF} \HFhat(Y) 
    = |H_1(Y)|,
\]
the order of the first homology group of $Y$.
When $Y = \Sigma(K)$ is the branched double cover of a knot, we have $|H_1(Y)| = |\det(K)|$. 
By duality properties of Khovanov homology, 
$\dim \Khred(K) = \dim \Khred(\overline K)$, 
and $\dim_{\BF} \BNred(\overline K) = \dim_{\BF} \BNred(K)$; furthermore, $\det(K) = \det(\overline K)$. 
We then have 
\[
    |\det(K)| \leq \dim_{\BF} \HFhat(\Sigma(K)) \leq \dim_{\BF} \Khred(\overline K) = |\det(\overline K)| = |\det(K)|
\]
(\cite[Corollary 1.2]{OzsvathSzaboss}), and so the  Ozsv\'ath-Szab\'o's link surgeries spectral sequence collapses immediately.

Hendricks and Manolescu show that if $Y$ is an $L$-space, then rank of $\HFI^+(Y, \fraks)$ (over $\BF[U,Q]/(Q^2)$) is the same as the rank of $\HFplus(Y, \fraks)$ (over $\BF[U]$) \cite[Corollary 4.8]{HendricksManolescu}.
When $U = 0$, we have
\[ 
    \HFIhat(Y,\fraks) \cong \HFhat(Y,\fraks) \otimes \BF[Q]/(Q^2). 
\]
In particular, if $K$ is Khovanov $\delta$-thin, then 
\[
    \dim_{\BF} \HFIhat(\Sigma(K)) 
    = 2\,\dim_{\BF} \HFhat(\Sigma(K)) 
    = 2\,|\det(K) |
    = \dim_{\BF}(\Khred(K)\otimes_{\BF} \BF[Q]/(Q^2)).
\]

We are now ready to prove the following.
\begin{theorem}
If $K$ is a $\delta$-thin knot, then the involutive link surgeries spectral sequence 
collapses immediately.
\begin{proof}
For any link $L$, the Bar-Natan--Lee spectral sequence
\[ 
    \Khred(\overline L)\otimes \BF[Q]/(Q^2) 
    \abuts
    \BNred(\overline L)
\]
first computes homology with respect to the Khovanov differentials on $\CKhred(D) \otimes \BF[Q]/(Q^2)$, and then computes homology with respect to the additional differentials given by the Bar-Natan perturbation (see \S \ref{sec:BN-complex-intro}).
Therefore 
\[ 
    \dim_{\BF} \Khred(\overline L)\otimes \BF[Q]/(Q^2) \geq 
    \dim_{\BF} \BNred(\overline L). 
\]

Thus for a $\delta$-thin knot $K$, 
\[
    \dim_{\BF} \BNred(\overline K) 
    \leq  \dim_{\BF} \Khred(\overline K)\otimes \BF[Q]/(Q^2) 
    = \dim_{\BF} \HFIhat(\Sigma(K))
    \leq \dim_{\BF} \BNred(\overline K),
\]
so $\dim_{\BF} \BNred(\overline K)$ is in fact equal to $\dim_{\BF} \HFIhat(\Sigma( K))$; i.e., the spectral sequence must collapse immediately.
\end{proof}
\end{theorem}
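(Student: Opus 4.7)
The plan is to run a sandwich/dimension-count argument that pins the $E^2$ and $E^\infty$ pages of the involutive spectral sequence to the same $\BF$-dimension, forcing all higher differentials to vanish.

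First I would collect the arithmetic on the Khovanov side. For a $\delta$-thin knot $K$, $\dim_\BF \Khred(K) = |\det(K)|$ because the total dimension of a $\delta$-thin homology is read off from the Jones polynomial evaluated at $-1$. The classical Ozsv\'ath--Szab\'o spectral sequence from $\Khred(\overline K)$ to $\HFhat(\Sigma(K))$ together with $|H_1(\Sigma(K))| = |\det(K)|$ then forces $\Sigma(K)$ to be an $L$-space, and $\dim_\BF \HFhat(\Sigma(K)) = |\det(K)|$. All of this uses only mirror-symmetry of $\det$ and of reduced Khovanov homology.

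Second, I would promote this to the involutive side. By Hendricks--Manolescu, for an $L$-space $Y$ one has $\HFIhat(Y) \cong \HFhat(Y)\otimes_\BF \BF[Q]/(Q^2)$ (as stated just before the theorem in the paper). Applied to $\Sigma(K)$ this gives
\[
    \dim_\BF \HFIhat(\Sigma(K)) = 2|\det(K)|.
\]
This will be the ``bottom'' of the sandwich, since our spectral sequence converges to $\HFIhat(\Sigma(K))$, so $\dim_\BF E^\infty = \dim_\BF \HFIhat(\Sigma(K)) = 2|\det(K)|$.

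Third, I would put a matching upper bound on $\dim_\BF E^2 = \dim_\BF \BNred(\overline K)$. The standard Bar-Natan--Lee-type spectral sequence has $E^1 = \CKhred(\overline D)\otimes \BF[Q]/(Q^2)$ with the Khovanov differential, $E^2 = \Khred(\overline K)\otimes \BF[Q]/(Q^2)$, and abuts to $\BNred(\overline K)$; taking further homology can only drop dimension, so
\[
    \dim_\BF \BNred(\overline K) \;\leq\; \dim_\BF \Khred(\overline K)\otimes \BF[Q]/(Q^2) \;=\; 2|\det(K)|.
\]
Combining the three steps, one has the chain
\[
    2|\det(K)| = \dim_\BF \HFIhat(\Sigma(K)) = \dim_\BF E^\infty \leq \dim_\BF E^2 = \dim_\BF \BNred(\overline K) \leq 2|\det(K)|,
\]
which forces equality throughout. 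Since $\dim_\BF E^2 = \dim_\BF E^\infty$ and every page is a subquotient of the preceding one, all higher differentials $d_r$ for $r \geq 2$ must vanish, i.e.\ the spectral sequence collapses at $E^2$. There is no real obstacle here: the only subtlety is making sure that the mirror does not cause trouble, but $\det$, $\dim \Khred$, and $\dim \BNred$ are all invariant under mirroring, so every inequality can be stated on the side of $K$ or of $\overline K$ as convenient.
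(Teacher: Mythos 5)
Your proposal is correct and follows essentially the same dimension-count sandwich as the paper: identify $\dim_\BF \HFIhat(\Sigma(K)) = 2|\det K|$ via the $L$-space property (and the Hendricks--Manolescu result), bound $\dim_\BF \BNred(\overline K)$ from above by $2|\det K|$ via the Bar-Natan--Lee spectral sequence, and squeeze using $E^\infty \leq E^2$. The only cosmetic difference is that you spell out explicitly why $\Sigma(K)$ is an $L$-space (via the Ozsv\'ath--Szab\'o spectral sequence), which the paper handles in the preceding discussion rather than inside the proof itself.
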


\subsection{A nontrivial spectral sequence}

In \cite[\S 6.8]{HendricksManolescu}, Hendricks and Manolescu compute $\HFI^+$ of the Brieskorn sphere
\[ 
     -\Sigma(2,3,7) 
    = S^3_{+1}(-T_{2,3})
    =\Sigma(-T_{3,7}),
\]
where $S^3_{+1}(-T_{2,3})$ is the $+1$-surgery of $S^3$ along the left-handed trefoil, and $\Sigma(-T_{3,7})$ is the branched double cover of $S^3$ along the mirror of the torus knot $T_{3,7}$.  

The complex $\CFIhat(\Sigma(-T_{3,7}))$ is the kernel of the map $U: \CFI^+(\Sigma(-T_{3,7})) \to \CFI^+(\Sigma(-T_{3,7}))$, and is shown in Figure~\ref{fig:brieskorn}. 
\begin{figure}[ht]
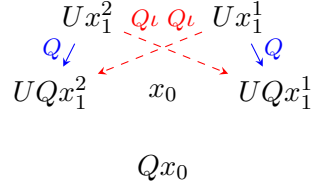

    \centering
    \includestandalone{images/brieskorn}
    \caption{The involutive complex $\CFIhat(\Sigma(-T_{3,7}))$ for $(+1)$-surgery on the left-handed trefoil (cf.\ \cite[Figure 12]{HendricksManolescu}). The solid blue arrows are $Q\cdot id$ maps, whereas the red dashed arrows are $Q \cdot \iota$ maps. There are no other differentials.}    \label{fig:brieskorn}
\end{figure}
The reader may check $\dim_{\BF}\HFIhat(\Sigma(-T_{3,7})) =4$.

To compute $\BNred(T_{3,7})$, one can use Lewark's \texttt{khoca} Khovanov homology calculator \cite{lewark-khoca}, by computing equivariant Khovanov ($\essl_2$-) homology over $\BF_2$ with potential $X^2 + bX + a$ where $a = 0$. Here, $b=-Q \ (=Q)$. The program outputs the Khovanov polynomial over $\BF = \BF_2$ along with the higher differentials in the Bar-Natan--Lee spectral sequence. 
After setting $Q^2=0$, we obtain Figure \ref{fig:T37}, and compute that $\dim_{\BF}\BNred(T_{3,7}) =14.$
Since $\dim_{\BF} E_2$ is strictly greater than $\dim_{\BF} E_\infty$, the spectral sequence does not collapse immediately.
\begin{figure}
    \centering
    \includestandalone{./images/T37}
    \caption{The visualized output of \texttt{khoca} on $T_{3,7}$, after setting $Q^2=0$. The $y_i$ (for $i$ in the range of supported homological $h$ gradings) generate $\Khred(T_{3,7})\otimes \BF[Q]/(Q^2)$ as a module over $\BF[Q]/(Q^2)$. The red arrows are higher differentials in the Bar-Natan--Lee spectral sequence of the form $y_i \mapsto Q\cdot y_{i+1}$; there are no longer differentials since $Q^2=0$.}
    \label{fig:T37}
\end{figure}

\bibliographystyle{alpha}
\bibliography{main}

\end{document}